\newtheorem{theorem}{theorem}[section]
\newtheorem{thm}[theorem]{Theorem}
\newtheorem{lem}[theorem]{Lemma}
\newtheorem{rmk}[theorem]{Remark}
\newtheorem{nota}[theorem]{Notation}
\begin{document}

\title{\textbf{Solving the isomorphism problems for two families of parafree groups}}
\author{\Large Haimiao Chen \\
{\normalsize Email: \emph{chenhm@math.pku.edu.cn}} \\
\normalsize \em{Beijing Technology and Business University, Beijing, China}}
\date{}
\maketitle

\begin{abstract}
  For any integers $m,n$ with $m\ne 0$ and $n>0$, let $G_{m,n}$ denote the group presented by $\langle x,y,z\mid x=[z^m,x][z^n,y]\rangle$; for any integers $m,n>0$, let $H_{m,n}$ denote the group presented by $\langle x,y,z\mid x=[x^m,z^n][y,z]\rangle$. By investigating cohomology jump loci of irreducible ${\rm GL}(2,\mathbb{C})$-character varieties, we show: if $m,m'\ne 0$, $n,n'>0$ and $G_{m',n'}\cong G_{m,n}$, then $m=m',n=n'$; if $m,m',n,n'>0$ and $H_{m',n'}\cong H_{m,n}$, then $m'=m, n'=n$.

  \medskip
  \noindent {\bf Keywords:} parafree group; isomorphism problem; ${\rm GL}(2,\mathbb{C})$-representation; character variety; cohomology jump locus \\
  {\bf MSC 2020:} 14M35, 20J05
\end{abstract}

\section{Introduction}

For nonzero integers $m,n$, let
\begin{align*}
G_{m,n}&=\langle x,y,z\mid x=[z^m,x][z^n,y]\rangle, \\
H_{m,n}&=\langle x,y,z\mid x=[x^m,z^n][y,z]\rangle,
\end{align*}
where the commutator $[x,y]=x^{-1}y^{-1}xy$.
The first family was introduced by Baumslag in 1960s \cite{Ba67,Ba69}, and the second family later \cite{Ba94}. They are examples of parafree groups. A group is called {\it parafree} if it is residually nilpotent and its lower central series of quotients are the same as those of some free group.

The isomorphism problem of groups is usually regarded to be fundamental. Taken seriously in Section 9 of \cite{Ba05} were three families, which we call the $G$, $H$, $K$ families. The isomorphism problem for the $K$ family was solved in \cite{HK17}, and those for the $G$ and $H$ families remain open. It was shown in \cite{FRS97} that $G_{m,1}\ncong G_{1,1}$ for $m>1$ and $G_{m,1}\ncong G_{m',1}$ for distinct prime $m,m'$. Enumerating homomorphisms to finite groups was applied in \cite{BCH04,LL94}, but could only deduce results for finitely many pairs $(m,n)$. Recently, infinitely members in a subfamily of the $G$ family have been distinguished from each other \cite{HK20}. Results on the $H$ family are rarely seen.

There is an isomorphism $G_{m,n}\cong G_{-m,-n}$ given by $x\mapsto x, y\mapsto y, z\mapsto z^{-1}$, so we may always assume $n>0$. (In some places, including \cite{Ba05}, $m,n$ are both assumed to be positive.)
There are isomorphisms $H_{m,n}\cong H_{-m-1,-n-1}$ and $H_{m,n}\cong H_{-m-1,n}$, given respectively by $x\mapsto x, y\mapsto yz^{n+1}x^{m+1}z^{-1}, z\mapsto z$, and $x\mapsto x^{-1}, y\mapsto yz^{1-n}, z\mapsto z^{-1}$.
Note that $H_{-1,n}\cong H_{m,-1}\cong F_2$, the free group on two generators. Thus for the $H$ family, we may assume $m,n>0$.

In this paper, we completely solve the isomorphism problems by showing
\begin{thm}\label{thm:solution-G}
For any integers $m,m',n,n'$ with $m,m'\ne 0$ and $n,n'>0$, if $G_{m,n}\cong G_{m',n'}$, then $m=m'$ and $n=n'$.
\end{thm}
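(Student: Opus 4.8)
The plan is to extract numerical invariants of $G_{m,n}$ from the geometry of its variety of irreducible $GL(2,\mathbb{C})$-representations together with the associated cohomology jump loci, and to show that these invariants recover the ordered pair $(m,n)$. Any isomorphism $G_{m,n}\cong G_{m',n'}$ induces an isomorphism of representation and character varieties that preserves both the jump loci and the locus of irreducible representations, so every geometric or numerical feature of this data is an isomorphism invariant. It is essential to work with rank-$2$ irreducible representations rather than abelian ones: since every commutator dies in the abelianization, the relation forces $x=1$ there, whence $G_{m,n}^{\mathrm{ab}}\cong\mathbb{Z}^2$ for all $m,n$; moreover a short Fox-calculus computation shows that $\partial r/\partial x$ abelianizes to the unit monomial $-t^{-m}$ (with $t$ the image of $z$), so the Alexander module is trivial and no rank-$1$ invariant can separate the groups.

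For the set-up I would present $G_{m,n}=\langle x,y,z\mid r\rangle$ with $r=x^{-1}[z^m,x][z^n,y]$ as a one-relator group and compute twisted cohomology through the two-term complex
\[
0\longrightarrow \mathfrak{g}\xrightarrow{\ d^0\ }\mathfrak{g}^{\oplus 3}\xrightarrow{\ d^1\ }\mathfrak{g}\longrightarrow 0,
\]
where $\mathfrak{g}=\mathfrak{sl}(2,\mathbb{C})$ carries the adjoint action of a representation $\rho$, $d^0$ is the usual coboundary, and $d^1$ is the $\mathrm{Ad}\,\rho$-twisted Fox Jacobian $\bigl(\mathrm{Ad}\,\rho(\partial r/\partial x),\,\mathrm{Ad}\,\rho(\partial r/\partial y),\,\mathrm{Ad}\,\rho(\partial r/\partial z)\bigr)$ of the single relator. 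Writing $X=\rho(x)$, $Y=\rho(y)$, $Z=\rho(z)$, I would stratify by the conjugacy type of $Z$; on the generic stratum $Z$ is diagonalizable, say $Z=\mathrm{diag}(\alpha,\beta)$, so that $Z^m=\mathrm{diag}(\alpha^m,\beta^m)$ and $Z^n=\mathrm{diag}(\alpha^n,\beta^n)$, and the matrix relation $X=[Z^m,X][Z^n,Y]$ becomes an explicit system in the entries of $X,Y$ in which the powers $\alpha^m/\beta^m$ and $\alpha^n/\beta^n$ are the only way $m$ and $n$ enter. This produces a concrete affine model of the irreducible locus on which the entries of $d^0,d^1$ are rational functions of $\alpha,\beta$ and of the surviving entry-parameters.

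For an irreducible $\rho$ one has $H^0(G_{m,n};\mathfrak{g})=0$ by Schur's lemma, and the Euler characteristic of the complex is $-\dim\mathfrak{g}=-3$, so $\dim H^1=\dim H^2+3$; thus the jump loci are detected entirely by the degeneracy of $d^1$, i.e.\ by the vanishing of the maximal minors of the Ad-twisted Fox Jacobian. Because $\mathrm{Ad}(Z^m)$ acts on the off-diagonal part of $\mathfrak{g}$ by $(\alpha/\beta)^{\pm m}$, I expect these minors to factor into pieces carrying separately the conditions $(\alpha/\beta)^m=1$ and $(\alpha/\beta)^n=1$ (equivalently, Chebyshev-type conditions on $\mathrm{tr}\,Z$), one kind coming from each commutator. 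Reading off the maximal orders of the roots of unity forced on the eigenvalue ratio of $Z$ along the components of the jump locus should then recover $m$ and $n$; since the two commutators are attached to the distinct generators $x$ and $y$ — and $x$, unlike $y$, is killed in the abelianization — the two kinds of component are intrinsically distinguishable, so the \emph{ordered} pair is recovered and Theorem~\ref{thm:solution-G} follows.

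The main obstacle is twofold. First, carrying out the Ad-twisted Fox-Jacobian computation and factoring its minors so that the $m$-dependence and the $n$-dependence separate cleanly — rather than collapsing into a symmetric combination such as $m+n$ or the unordered set $\{m,n\}$ — requires real control of the polynomial identities in $\alpha,\beta$, and this is where the asymmetry of the relation must be exploited. Second, one must treat the non-generic strata (scalar $Z$, non-diagonalizable $Z$, and the boundary where irreducibility degenerates) and verify that the jump-locus invariants extend consistently over the whole character variety, so that the extracted $(m,n)$ is unambiguous and genuinely isomorphism-invariant.
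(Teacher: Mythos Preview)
Your overall strategy --- extract isomorphism invariants from the component structure of a depth-$k$ cohomology jump locus inside the irreducible ${\rm GL}(2,\mathbb{C})$ character variety --- is exactly the paper's strategy. But two concrete expectations in your plan do not match what the computation actually produces, and together they constitute a genuine gap.

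First, the coefficient module. You take adjoint coefficients $\mathfrak{g}=\mathfrak{sl}_2$, while the paper works with the standard module $V_\rho=\mathbb{C}^2$ and the depth-$3$ locus $\mathcal{J}_3$. More importantly, the paper does not read invariants off the jump locus alone: it uses the \emph{determinant} morphism ${\det}_\ast:\mathcal{J}_3(G_{m,n})\to\hom(G_{m,n},\mathbb{C}^\ast)\cong(\mathbb{C}^\ast)^2$, which is natural in the group, and extracts invariants from both the images $\det_\ast(\mathcal{C})$ of components $\mathcal{C}$ and from the fibers $\det_\ast^{-1}(\mathfrak{a})$. This extra map is what makes the numerics tractable; nothing in your plan supplies an analogue.

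Second, and this is the real issue, your predicted factorization is wrong. You expect the degeneracy conditions to split cleanly into ``$(\alpha/\beta)^m=1$'' pieces and ``$(\alpha/\beta)^n=1$'' pieces, one from each commutator, so that $m$ and $n$ can be read off as the maximal root-of-unity orders. The actual computation shows something quite different. The primary necessary condition is $\det(\mathbf{z}^n-\mathbf{e})=0$, i.e.\ one eigenvalue $\zeta$ of $Z$ is an $n$-th root of unity; the other eigenvalue $\lambda$ remains generic. Within this, the three-dimensional components $\chi(\mathcal{F}_\zeta)$ are indexed by $\zeta$ with $\zeta^g=1$ where $g=\gcd(m,n)$, while for $\zeta^n=1$, $\zeta^g\ne 1$ one gets a finite collection of two-dimensional components $\chi(\mathcal{G}_\zeta^\lambda)$ indexed by roots of the polynomial $f(\lambda)=\lambda^{m+n}-\lambda^n-\lambda^m$, whose root count is $\ell_{m,n}=\max\{m,n,n-m\}$. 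No ``$\zeta^m=1$'' stratum appears as you anticipate. The invariants the paper actually extracts are
\[
g,\qquad n+|m|,\qquad \ell_{m,n},\qquad (n-g)\ell_{m,n},
\]
coming respectively from the number of $3$-dimensional components, the first Betti number of $\det_\ast(\mathcal{C})$, the number of connected pieces of $\{\mathfrak{a}:\det_\ast^{-1}(\mathfrak{a})\ncong\mathbb{C}^\ast\}$, and the count of $\mathbb{C}\times\mathbb{C}^\ast$ components. These four numbers together force $(m,n)=(m',n')$; none of them is ``$m$'' or ``$n$'' on its own. Moreover, the case $m=-n$ has genuinely different geometry (extra components $\chi(\mathcal{H}_\zeta)$ from the $\mathbf{z}=\mathbf{p}(\zeta)$ stratum) and must be handled separately via $\#\pi_0(\{\mathfrak{a}:\dim\det_\ast^{-1}(\mathfrak{a})=2\})$.

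So: right framework, but your heuristic for how $m$ and $n$ manifest in the jump locus is incorrect, and without the $\det_\ast$ map you are missing the tool that actually isolates the invariants. To make the plan work you would need to carry out the rank computation in full and discover the $\gcd$/$\ell_{m,n}$ structure, rather than the clean $m$-vs-$n$ split you are hoping for.
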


\begin{thm}\label{thm:solution-H}
For any integers $m,m',n,n'>0$, if $H_{m,n}\cong H_{m',n'}$, then $m=m'$ and $n=n'$.
\end{thm}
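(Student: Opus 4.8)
The plan is to exploit the fact that the cohomology jump loci of the $\mathrm{GL}(2,\mathbb{C})$-character variety are intrinsic invariants of a finitely presented group: any abstract isomorphism $G_{m,n}\cong G_{m',n'}$ (or $H_{m,n}\cong H_{m',n'}$) induces an isomorphism of character varieties matching jump loci to jump loci, so it suffices to recover the parameters from this data. Each group has a one-relator presentation $\langle x,y,z\mid r\rangle$ with abelianization $\mathbb{Z}^{2}$, so I would compute the relevant twisted cohomology by Fox calculus. For a representation $\rho$ with $\rho(x)=X,\rho(y)=Y,\rho(z)=Z$ and coefficient module $V$ (either $\mathbb{C}^{2}$ or $\mathfrak{g}=\mathfrak{sl}(2,\mathbb{C})$ via $\mathrm{Ad}\rho$), the cochain complex is $V\to V^{3}\to V$, with first map $\xi\mapsto((X-1)\xi,(Y-1)\xi,(Z-1)\xi)$ and second map the image under $\rho$ of the Fox Jacobian $\bigl(\partial r/\partial x,\ \partial r/\partial y,\ \partial r/\partial z\bigr)$. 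The relevant jump locus $\Sigma=\{[\rho]:\dim H^{1}(G;V_{\rho})\ge k\}$ is then precisely where this explicit two-block complex drops rank.

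First I would record how the exponents enter the Jacobian. For $G_{m,n}$, with $r=[z^{m},x][z^{n},y]\,x^{-1}$, one computes $\rho(\partial r/\partial x)=Z^{-m}X^{-1}(Z^{m}-I)-I$ and $\rho(\partial r/\partial y)=[Z^{m},X]\,Z^{-n}Y^{-1}(Z^{n}-I)$, so the $y$-block carries the factor $Z^{n}-I$ and the $m$-dependence sits in $Z^{m}-I$. Restricting to the Zariski-dense locus of diagonalizable $Z$, these factors degenerate exactly when an eigenvalue of $Z$ is an $m$-th (resp.\ $n$-th) root of unity, that is, when the cyclotomic factor $I+Z+\cdots+Z^{k-1}$ of $Z^{k}-I$ becomes singular. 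Thus $\Sigma$ splits into one family of components governed by $Z^{m}=I$ and another by $Z^{n}=I$, cut out by polynomial conditions of degree $m-1$ and $n-1$ in the eigenvalues of $Z$; these degrees recover $m$ and $n$. For $H_{m,n}$, with $r=[x^{m},z^{n}][y,z]\,x^{-1}$, the analogous computation gives $\rho(\partial r/\partial x)=X^{-m}(Z^{-n}-I)(I+X+\cdots+X^{m-1})-I$ and a $z$-block carrying the factor $I+Z+\cdots+Z^{n-1}$; hence $m$ is read off from the eigenvalues of $X$ (nontrivial $m$-th roots of unity) and $n$ from those of $Z$. Since $x$ is trivial in the abelianization while $z$ generates a free factor, the $m$-jump family for $H$ lives only in the irreducible locus where $X\ne\pm I$.

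To convert matching jump loci into $m=m'$ and $n=n'$, I would phrase every extracted quantity intrinsically. The character variety fibers over the abelian character torus $\mathrm{Hom}(\mathbb{Z}^{2},\mathbb{C}^{\times})$, which is canonical only up to the $\mathrm{GL}(2,\mathbb{Z})$-action on $G^{\mathrm{ab}}$, so the recovered data must be invariant under this action. For $G_{m,n}$ the generator $z$ is singled out intrinsically as the unique torus direction appearing in every jump component (both brackets are conjugations by powers of $z$), fixing the axis along which the two cyclotomic degrees $m-1$ and $n-1$ are measured; the two families are separated because one couples to the abelianization-trivial generator $x$ (detected by $\det X=1$) and the other to $y$. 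For $G$ one must also pin down the sign of $m$ (the normalization $n>0$ leaves $G_{m,n}$ and $G_{-m,n}$ a priori distinct), which I would extract from the finer scheme structure of $\Sigma$, equivalently from the support in the abelian variable $t=\det Z$ of the associated twisted Alexander polynomial, whose Newton segment distinguishes $t^{m}$ from $t^{-m}$. For $H_{m,n}$ both exponents are positive, so only the magnitudes are needed, and the two families are already separated by sitting over $X$ versus $Z$.

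The hard part will be twofold. The first difficulty is the explicit rank analysis: reducing $\dim H^{1}$ of a relator built from commutators to clean cyclotomic conditions is delicate, and one must verify that the generic value of $\dim H^{1}$ together with the jump increments makes each component count exactly $m-1$ or $n-1$, rather than a divisor or multiple. The second, and I expect the true obstacle, is intrinsicality: since an abstract isomorphism need not respect $x,y,z$, the selection of the distinguished torus direction, the split of $\Sigma$ into two families, and the sign of $m$ must all be formulated purely in terms of the pair (character variety, jump loci) and shown to be stable under the $\mathrm{GL}(2,\mathbb{Z})$ ambiguity of the abelianization. Establishing this intrinsic separation of the two exponents, rather than the rank computation itself, is where the argument will be most demanding.
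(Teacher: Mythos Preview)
Your proposal is a sketch rather than a proof, and the sketch rests on a structural prediction about $\mathcal{J}_3(H_{m,n})$ that turns out to be wrong, so the argument cannot be completed along the lines you describe.

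You assert that for $H_{m,n}$ the jump locus decomposes into cyclotomic pieces: one family where an eigenvalue of $X=\rho(x)$ is a nontrivial $m$-th root of unity, another where an eigenvalue of $Z=\rho(z)$ is a nontrivial $n$-th root of unity, and that counting these components recovers $m-1$ and $n-1$. But the rank condition does not factor this way. The $u$-block of the Fox Jacobian carries only $Z^{-1}-I$ (since the second commutator is $[y,z]$, not $[y,z^n]$), so the necessary condition from that block is that $Z$ has eigenvalue $1$, not that $Z^n=I$. And while a factor $[m]_X$ does appear in $\partial r/\partial x$, it is entangled with $[m+1]_X$ and with $Z^n$: the paper's block is $\mathbf{g}=\mathbf{z}^n[m+1]_{\mathbf{x}}-[m]_{\mathbf{x}}$, and the actual jump condition, after a lengthy reduction, is not $\det([m]_X)=0$ but rather the single polynomial equation
\[
\gamma_{m+1}(r)+\gamma_m(r)+\frac{\vartheta_{2n+1}}{\vartheta_{n+1}\vartheta_n}=0
\]
in $r=\mathrm{tr}(X)$, with $\lambda$ the non-unit eigenvalue of $Z$. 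This is a degree-$m$ equation in $r$ whose roots are almost never roots of unity, so there are no cyclotomic components in the $X$-direction and no $n-1$ components in the $Z$-direction to count.

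What actually works, and what the paper does, is to use the determinant map $\det_\ast:\mathcal{J}_3(H_{m,n})\to\hom(H_{m,n},\mathbb{C}^\ast)\cong(\mathbb{C}^\ast)^2$. The integer $m$ is recovered as the cardinality of a generic fibre of $\det_\ast$ (precisely the number of roots of $h_\lambda(r)$ above), and $n$ is recovered from $H_1$ of the image $\det_\ast(\mathcal{J}_3(H_{m,n}))$, which equals $R\times\mathbb{C}^\ast$ for an explicit punctured $R\subset\mathbb{C}^\ast$ whose number of punctures is determined by $n$ (and by the parity/size of $m$, already known). Both quantities are manifestly intrinsic because $\det_\ast$ is natural in the group; no separation into ``$X$-type'' versus ``$Z$-type'' families is needed or available. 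Your anticipated ``intrinsic separation of the two exponents'' is thus the wrong worry: the real work is the explicit rank analysis, and its outcome does not have the cyclotomic shape you predicted.
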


This is achieved by studying ${\rm GL}(2,\mathbb{C})$-character varieties.

Given a finitely presented group $\Gamma$, let $\mathcal{R}^{\rm irr}(\Gamma)$ denote the space of irreducible representations $\Gamma\to{\rm GL}(2,\mathbb{C})$, on which ${\rm GL}(2,\mathbb{C})$ acts by conjugation. For each $\rho\in\mathcal{R}^{\rm irr}(\Gamma)$, its {\it character} $\chi_\rho$ is by definition the function $\Gamma\to\mathbb{C}$ sending $g$ to ${\rm tr}(\rho(g))$. Put $\mathcal{X}^{\rm irr}(\Gamma)=\{\chi_\rho\colon\rho\in\mathcal{R}^{\rm irr}(\Gamma)\}$, and call it the (irreducible) {\it ${\rm GL}(2,\mathbb{C})$-character variety} of $\Gamma$. It is known that $\rho,\rho'\in\mathcal{R}^{\rm irr}(\Gamma)$ are conjugated if and only if $\chi_\rho=\chi_{\rho'}$, so we may identify the conjugacy class $[\rho]$ with $\chi_\rho$. Also known is that $\mathcal{X}^{\rm irr}$ defines a functor from the category of groups to that of algebraic varieties. For more information, see \cite{LM85,Si12} and the references therein.

For each $\rho\in\mathcal{R}^{\rm irr}(\Gamma)$, let $V_\rho=\mathbb{C}^2$, equipped with the $\Gamma$-module structure via $\rho$. Then $\dim H^1(\Gamma;V_\rho)$ depends only on $[\rho]$.
For $k\in\mathbb{N}$, let
$$\mathcal{J}_k(\Gamma)=\{\chi_\rho\in\mathcal{X}^{\rm irr}(\Gamma)\colon \dim H^1(\Gamma;V_\rho)\ge k\}.$$
It is a subvariety of $\mathcal{X}^{\rm irr}(\Gamma)$, called the (nonabelian) {\it cohomology jump locus} or {\it characteristic variety} in degree 1 and depth $k$, as in the literature \cite{DP14,DPS09}.
The original (abelian) cohomology jump locus was defined as
$$\mathcal{V}_k^i(\Gamma)=\{\tau\in\hom(\Gamma,\mathbb{C}^\ast)\colon \dim H^i(\Gamma;V_\tau)\ge k\}$$
for nonzero integers $i,k$, and has found many applications (see \cite{DS09,PS10,SYZ15}, etc).

Here is the strategy for proving Theorem \ref{thm:solution-G} and Theorem \ref{thm:solution-H}. As a key observation, the morphism
$${\det}_\ast:\mathcal{X}^{\rm irr}(\Gamma)\to\Gamma^\wedge:=\hom(\Gamma,\mathbb{C}^\ast)$$
induced by $\det:{\rm GL}(2,\mathbb{C})\to\mathbb{C}^\ast$ is well-defined and is natural in $\Gamma$.
If there exists an isomorphism $\phi:\Gamma'\to\Gamma$, then the following diagram commutes:
\begin{align}
\xymatrix{
&\mathcal{J}_3(\Gamma)\ar[r]^{\phi^\ast}_{\cong}\ar[d]_{\det_\ast}&\mathcal{J}_3(\Gamma')\ar[d]^{\det_\ast} \\
&\Gamma^\wedge\ar[r]^{\phi^\ast}_{\cong}&(\Gamma')^\wedge
}  \label{eq:natural-transformation}
\end{align}
In particular, $\det_\ast(\phi^\ast(\mathcal{C}))\cong\det_\ast(\mathcal{C})$ for each component $\mathcal{C}\subset\mathcal{J}_3(\Gamma)$, and $\det_\ast^{-1}(\phi^\ast(\mathfrak{a}))\cong\det_\ast^{-1}(\mathfrak{a})$ for each $\mathfrak{a}\in\Gamma^\wedge$.
We are able to well understand the topology of each connected component of $\mathcal{J}_3(G_{m,n})$ and of each fiber of $\det_\ast$, and extract enough numerical information on $m,n$, to deduce $m'=m$ and $n'=n$ from the existence of an isomorphism $G_{m',n'}\cong G_{m,n}$. The same method is also successfully applied to the $H$ family.


In general, the whole character variety $\mathcal{X}^{\rm irr}(\Gamma)$ may be complicated. But $\mathcal{J}_k(\Gamma)$ has lower dimension, and is relatively easy to determine. The ${\rm GL}(2,\mathbb{C})$-character variety of a free group has a natural coordinate system via trace functions (see \cite{Dr03} for instance). For a 3-generator group $\Gamma$, we could have described $\mathcal{X}^{\rm irr}(\Gamma)$ as a subvariety of $\mathcal{X}^{\rm irr}(F_3)$ (which can be embedded as a hypersurface of $\mathbb{C}^9$ via trace coordinates). Nevertheless, we choose not to do so; instead, we choose a unique explicit representative for each conjugacy class of ${\rm GL}(2,\mathbb{C})$-representations, making computations more convenient. In this paper, ``varieties" are almost treated just as subspaces of $\mathbb{C}^N$ for some $N$.

\begin{nota}
\rm Let 
\begin{align*}
\mathbf{e}=\left(\begin{array}{cc} 1 & 0 \\ 0 & 1 \end{array}\right), \qquad
\mathbf{p}=\left(\begin{array}{cc} 1 & 1 \\ 0 & 1 \end{array}\right); \qquad 
\mathbf{d}(\mu,\nu)=\left(\begin{array}{cc} \mu & 0 \\ 0 & \nu \end{array}\right), \quad \mu,\nu\in\mathbb{C}^\ast.
\end{align*}

For $\mathbf{s}\in{\rm GL}(2,\mathbb{C})$, denote its $(i,j)$-entry by $\mathbf{s}_{ij}$, and its $j$-th column by $\mathbf{s}_{\ast j}$.

For $k\in\mathbb{Z}$ and an element $\mathfrak{r}$ of some ring with unit, let
$[k]_{\mathfrak{r}}=\sum_{j=0}^{k-1}\mathfrak{r}^j$ if $k>0$, and $[k]_{\mathfrak{r}}=-\mathfrak{r}^{k}\sum_{j=0}^{|k|-1}\mathfrak{r}^j$ if $k<0$; let $[0]_{\mathfrak{r}}=0$.

For a finite set $X$, let $\#X$ denote its cardinality.
\end{nota}

\section{The $G$ family}

In this section, let $g={\rm gcd}(m,n)$, the greatest common divisor of $m$ and $n$.
For a positive integer $k$, let $\Lambda_k=\{1\ne\zeta\in\mathbb{C}\colon \zeta^k=1\}$.

Identify $(G_{m,n})^\wedge$ with $\mathbb{C}^\ast\times\mathbb{C}^\ast$, via $\tau\mapsto(\tau(z),\tau(y))$.

The relation $x=[z^m,x][z^n,y]$ can be rewritten as
$$z^{n}w^{-1}z^{-m}w^2=y^{-1}z^{n}y, \qquad \text{with}\quad w=z^mx.$$
Given $\mathbf{z},\mathbf{w},\mathbf{y}\in{\rm GL}(2,\mathbb{C})$, there exists a representation $G_{m,n}\to{\rm GL}(2,\mathbb{C})$ sending $z,w,y$ respectively to $\mathbf{z},\mathbf{w},\mathbf{y}$ if and only if
\begin{align}
\mathbf{v}:=\mathbf{z}^{n}\mathbf{w}^{-1}\mathbf{z}^{-m}\mathbf{w}^2=\mathbf{y}^{-1}\mathbf{z}^{n}\mathbf{y}; \label{eq:G-relation}
\end{align}
when this holds, denote the unique representation by $\rho_{\mathbf{z},\mathbf{w},\mathbf{y}}$. It is irreducible if and only if $\mathbf{z},\mathbf{w},\mathbf{y}$ have no common eigenvector.

Suppose $\rho=\rho_{\mathbf{z},\mathbf{w},\mathbf{y}}$ is an irreducible representation.

If $\mathfrak{d}:G_{m,n}\to V_\rho$ is a derivation,
with $\mathfrak{d}(w)=\xi_1, \mathfrak{d}(y)=\xi_2, \mathfrak{d}(z)=\xi_3$,
then
\begin{align*}
\mathfrak{d}(z^nw^{-1}z^{-m}w^2)
&=\mathbf{z}^n\mathbf{w}^{-1}\mathbf{z}^{-m}(\mathbf{e}+\mathbf{w}-\mathbf{z}^m)\xi_1
+([n]_{\mathbf{z}}+\mathbf{z}^n\mathbf{w}^{-1}[-m]_{\mathbf{z}})\xi_3, \\
\mathfrak{d}(y^{-1}z^ny)&=\mathbf{y}^{-1}(\mathbf{z}^n-\mathbf{e})\xi_2+\mathbf{y}^{-1}[n]_{\mathbf{z}}\xi_3.
\end{align*}
Hence the space $\mathfrak{D}_\rho$ of derivations $G_{m,n}\to V_\rho$ can be identified with that of triples $(\xi_1,\xi_2,\xi_3)$ satisfying
\begin{align}
&\mathbf{z}^n\mathbf{w}^{-1}\mathbf{z}^{-m}(\mathbf{e}+\mathbf{w}-\mathbf{z}^m)\xi_1+\mathbf{y}^{-1}(\mathbf{e}-\mathbf{z}^n)\xi_2 \nonumber \\
&\qquad\quad +(\mathbf{z}^n\mathbf{w}^{-1}[-m]_{\mathbf{z}}+[n]_{\mathbf{z}}-\mathbf{y}^{-1}[n]_{\mathbf{z}})\xi_3=0.     \label{eq:relation0}
\end{align}
Under this identification, the subspace of inner derivations
$$\mathfrak{I}_\rho=\{((\mathbf{w}-\mathbf{e})\eta,(\mathbf{y}-\mathbf{e})\eta,(\mathbf{z}-\mathbf{e})\eta)\colon\eta\in V_\rho\},$$
which, due to the irreducibility of $\rho$, is 2-dimensional.
According to the basic fact $H^1(G_{m,n};V_{\rho})\cong\mathfrak{D}_\rho/\mathfrak{I}_\rho$ (see \cite{HS97} Chap. VI, Corollary 5.2), $\chi_\rho\in\mathcal{J}_3(G_{m,n})$ if and only if $\dim\mathfrak{D}_\rho\ge 5$.

Multiplied by $\mathbf{z}^m\mathbf{w}\mathbf{z}^{-n}$ on the left, (\ref{eq:relation0}) becomes
$\mathbf{a}\xi_1+\mathbf{c}\xi_3=\mathbf{b}(\mathbf{y}^{-1}\xi_2)$, with
\begin{align*}
\mathbf{a}&=\mathbf{e}+\mathbf{w}-\mathbf{z}^m, \\
\mathbf{b}&=\mathbf{w}^2-\mathbf{z}^m\mathbf{w}\mathbf{z}^{-n}, \\
\mathbf{c}&=\mathbf{z}^m[-m]_{\mathbf{z}}+\mathbf{z}^m\mathbf{w}\mathbf{z}^{-n}[n]_{\mathbf{z}}
-\mathbf{w}^2\mathbf{y}^{-1}\mathbf{z}^{-n}[n]_{\mathbf{z}}.
\end{align*}
Hence $\dim \mathfrak{D}_\rho\ge 5$ if and only if ${\rm rank}(\mathbf{a},\mathbf{b},\mathbf{c})\le 1$.

Regarding (\ref{eq:relation0}), a necessary condition for $\dim \mathfrak{D}_\rho\ge 5$ is $\det(\mathbf{z}^n-\mathbf{e})=0$. Also necessary is $\mathbf{z}^n\ne\mathbf{e}$: otherwise, by (\ref{eq:G-relation}), $\mathbf{w}=\mathbf{z}^m$ so that $\mathbf{a}=\mathbf{e}$, which would imply $\dim \mathfrak{D}_\rho=4$.
Therefore, up to conjugacy we may assume that either $\mathbf{z}=\mathbf{d}(\lambda,\zeta)$ with $\zeta^n=1\ne\lambda^n$, or $\mathbf{z}=\zeta\mathbf{p}$ with $\zeta^n=1$.
Suppose
$$\mathbf{w}=\left(\begin{array}{cc} a & b \\ c & d \end{array}\right), \qquad
\mathbf{y}=\left(\begin{array}{cc} a' & b' \\ c' & d' \end{array}\right),$$
with $ad-bc=\lambda^m\zeta^m$ and $t:=a'd'-b'c'\ne 0$.

\begin{rmk} \label{rmk:G-condition}
\rm Now that $\mathfrak{I}_\rho\subset\mathfrak{D}_\rho$, we have
$$\mathbf{c}(\mathbf{z}-\mathbf{e})\eta=\mathbf{a}(\mathbf{e}-\mathbf{w})\eta+\mathbf{b}(\mathbf{e}-\mathbf{y}^{-1})\eta$$
for all $\eta\in V_\rho$.
Consequently, when $\zeta\ne 1$ so that $\mathbf{z}-\mathbf{e}$ is invertible, ${\rm rank}(\mathbf{a},\mathbf{b})\le 1$ has been sufficient for $\dim \mathfrak{D}_\rho\ge 5$; when $\zeta=1$, the condition ${\rm rank}(\mathbf{a},\mathbf{b},\mathbf{c}_{\ast2})\le 1$ is sufficient for $\dim \mathfrak{D}_\rho\ge 5$.
\end{rmk}

\subsection{$\mathbf{z}=\mathbf{d}(\lambda,\zeta)$ with $\zeta^n=1\ne\lambda^{n}$}

Let $r=a+d$, and introduce $\vartheta_k=\lambda^k-1$ for $k\in\mathbb{Z}$.

There exists $\mathbf{y}$ with $\mathbf{v}=\mathbf{y}^{-1}\mathbf{z}^n\mathbf{y}$ if and only if ${\rm tr}(\mathbf{v})={\rm tr}(\mathbf{z}^n)$, which is equivalent to
\begin{align}
\vartheta_n(\lambda^{-m}-\zeta^{-m})rad+(\lambda^{m}\vartheta_n+\zeta^m)a+(\lambda^{n+m}-\vartheta_n\zeta^m)d=(\lambda^n+1)\lambda^m\zeta^m. \label{eq:trace}
\end{align}
We can write $\mathbf{y}\mathbf{v}=\mathbf{z}^n\mathbf{y}$ as
\begin{align}
(a',b')(\mathbf{v}-\lambda^n\mathbf{e})=(c',d')(\mathbf{v}-\mathbf{e})=0.  \label{eq:determine-y}
\end{align}
Writing $(\mathbf{a},\mathbf{b})$ explicitly as
$$\left(\begin{array}{cccc}
a-\vartheta_m & b & (r-\lambda^{m-n})a-\lambda^m\zeta^m & (r-\lambda^{m})b \\
c & d+1-\zeta^m & (r-\zeta^m\lambda^{-n})c & (r-\zeta^{m})d-\lambda^m\zeta^m
\end{array}\right),$$
we easily see that ${\rm rank}(\mathbf{a},\mathbf{b})\le 1$ if and only if
\begin{align*}
{\rm rank}\left(\begin{array}{cccc} a-\vartheta_m & b & \kappa_1 & 0 \\
c & d+1-\zeta^m & 0 & \kappa_2 \end{array}\right)\le 1,
\end{align*}
where
\begin{align*}
\kappa_1&=(\zeta^m\lambda^{-n}-\lambda^{m-n}+\vartheta_m)a+\vartheta_md+\zeta^m(\lambda^{-n}-\lambda^{m-n}-\lambda^{m}), \\
\kappa_2&=(\zeta^m-1)a+\vartheta_md+\lambda^m(1-2\zeta^m).
\end{align*}

If $\kappa_1\ne 0$, then $c=0$, $d=\zeta^m-1$, and $\kappa_2=0$, i.e.
$$0=\lambda^m\zeta^m+\vartheta_m(\zeta^m-1)+\lambda^m(1-2\zeta^m)=1-\zeta^m=-d,$$
which is absurd.
Thus $\kappa_1=0$. 

\subsubsection{$\kappa_2=0$}

The condition $\det(\mathbf{a})=0$ reads
$$(1-\zeta^{m})a-\vartheta_md+(2\lambda^m-1)\zeta^m-\vartheta_m=0,$$
which together with $\kappa_2=0$ implies $\zeta^m=1$ (so that $\zeta^g=1$), and
$d=\lambda^m\vartheta_m^{-1}$. Then $\kappa_1=0$ becomes $a=\vartheta_n^{-1}$. As a result,
$$r=\vartheta_{m+n}\vartheta_m^{-1}\vartheta_n^{-1}, \qquad bc=ad-\lambda^m=\lambda^m(\vartheta_m^{-1}\vartheta_n^{-1}-1),$$
and
\begin{align*}
\mathbf{v}&=\lambda^{-m}\left(\begin{array}{cc}  \lambda^n(\lambda^{-m}ra-1)d-\lambda^nrbc & (\lambda^m+\vartheta_mrd)\lambda^nb \\ (1-\vartheta_{-m}ra)c & a(rd-\lambda^m)-\lambda^{-m}rbc \end{array}\right) \\
&=\left(\begin{array}{cc}
(\lambda^{n}(\lambda^n\vartheta_m\vartheta_n+\vartheta_{-m}-\vartheta_n)\vartheta_m^{-1}\vartheta_n^{-2} & \lambda^n(1-\lambda^{-m}\vartheta_{m+n}\vartheta_m^{-1}\vartheta_n^{-1})b \\
\lambda^{-m}(\lambda^n\vartheta_n-\vartheta_{-m})\vartheta_n^{-2}c & (\lambda^{n-m}\vartheta_{m+n}\vartheta_m^{-1}\vartheta_n^{-1}-1)\vartheta_n^{-1} \end{array}\right).
\end{align*}

It turns out that (\ref{eq:trace}) has been fulfilled: 
\begin{align*}
&\vartheta_n\vartheta_{-m}\cdot\vartheta_{m+n}\vartheta_m^{-1}\vartheta_n^{-1}\cdot\vartheta_n^{-1}\cdot\lambda^m\vartheta_m^{-1}+(\lambda^m\vartheta_n+1)\vartheta_n^{-1}+(\lambda^{n+m}-\vartheta_n)\lambda^m\vartheta_m^{-1} \\
=\ &-\vartheta_{m+n}\vartheta_m^{-1}\vartheta_n^{-1}+\lambda^m+\vartheta_n^{-1}+(\lambda^{n+m}-\vartheta_n)\lambda^m\vartheta_m^{-1}=(\lambda^n+1)\lambda^m.
\end{align*}
Observe that $(a,b)(\mathbf{v}-\mathbf{e})=0$: clearly $(a,b)(\mathbf{v}-\mathbf{e})_{\ast 2}=0$; not obvious is
\begin{align*}
&(a,b)(\mathbf{v}-\mathbf{e})_{\ast 1} \\
=\ &\lambda^n(\lambda^n\vartheta_m\vartheta_n+\vartheta_{-m}-\vartheta_n)\vartheta_m^{-1}\vartheta_n^{-3}-\vartheta_n^{-1}+(\lambda^n\vartheta_n-\vartheta_{-m})\vartheta_n^{-2}(\vartheta_m^{-1}\vartheta_n^{-1}-1) \\
=\ &\big((\lambda^n\vartheta_n+1)\vartheta_m\vartheta_n+\lambda^n(\vartheta_{-m}-\vartheta_n)+(\lambda^n\vartheta_n-\vartheta_{-m})(1-\vartheta_m\vartheta_n)\big)\vartheta_m^{-1}\vartheta_n^{-3}=0.
\end{align*}
Hence by (\ref{eq:determine-y}) we have $(c',d')\parallel(a,b)$.
Fix the conjugacy indeterminacy of $\rho$ by setting $c'=1$. Then $d'=\vartheta_nb$.
Note that the case $b'=b=0$ never occur, as $\vartheta_na'b-b'=t\ne 0$. So the irreducibility of $\rho_{\mathbf{z},\mathbf{w},\mathbf{y}}$ is already ensured.

From (\ref{eq:determine-y}) we see
\begin{align*}
0&=(a',a'd'-t)(\mathbf{v}-\lambda^n\mathbf{e})=a'(1,d')(\mathbf{v}-\lambda^n\mathbf{e})-(0,t)(\mathbf{v}-\lambda^n\mathbf{e}) \\
&=-\vartheta_na'(1,d')-(0,t)(\mathbf{v}-\lambda^n\mathbf{e}),
\end{align*}
and then deduce
$$a'=\lambda^{-m}(\vartheta_{-m}-\lambda^n\vartheta_n)\vartheta_n^{-3}ct,  \qquad  b'=(\lambda^n\vartheta_n-\vartheta_{-m})(1-\vartheta_m^{-1}\vartheta_n^{-1})\vartheta_n^{-2}t.$$
Let $\varrho(\zeta;\lambda,b,c,t)=\rho_{\mathbf{z},\mathbf{w},\mathbf{y}}$, with
\begin{align*}
\mathbf{z}=\mathbf{d}(\lambda,\zeta), \qquad \mathbf{w}=\left(\begin{array}{cc} \vartheta_n^{-1} & b \\ c & \lambda^{m}\vartheta_m^{-1} \end{array}\right),  \\
\mathbf{y}=\left(\begin{array}{cc} \lambda^{-m}(\vartheta_{-m}-\lambda^n\vartheta_n)\vartheta_n^{-3}ct & ((\lambda^n\vartheta_n-\vartheta_{-m})(1-\vartheta_m^{-1}\vartheta_n^{-1})\vartheta_n^{-2}-1)t \\
1 & \vartheta_nb \end{array}\right).
\end{align*}

When $\zeta\ne 1$, there is no further constraint.
Let
$$\mathcal{F}_\zeta=\{\varrho(\zeta;\lambda,b,c,t)\colon\lambda, t\in\mathbb{C}^\ast,\ \vartheta_m\vartheta_n\ne 0,\ bc=\lambda^m(\vartheta_m^{-1}\vartheta_n^{-1}-1)\}\subset\mathcal{R}^{\rm irr}(G_{m,n}).$$


\begin{lem}\label{lem:f}
The function $f(\lambda):=\vartheta_m\vartheta_n-1=\lambda^{n+m}-\lambda^{n}-\lambda^m$ has no multiple root, so the number of distinct roots of $f$ is $\ell_{m,n}:=\max\{m,n,n-m\}$.
\end{lem}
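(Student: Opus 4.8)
The plan is to reduce the statement to the separability of a single trinomial and then to rule out repeated roots by an elementary estimate on their modulus. Since $\lambda$ is an eigenvalue of the invertible matrix $\mathbf{z}$, only $\lambda\ne0$ is relevant, so I will show that every nonzero root of $f$ is simple (the point $\lambda=0$, which is a root of multiplicity $\min\{m,n\}$ when $m>0$ and merely a pole when $m<0$, plays no role). Dividing $f$ by $\lambda^{m}$ rewrites $f(\lambda)=0$ as $h(\lambda)=0$ with $h(\lambda)=\lambda^{n}-\lambda^{n-m}-1$; clearing the possibly negative powers of $\lambda$ yields a genuine polynomial which a short case analysis, on the sign of $m$ and on the comparison of $m$ with $n$, shows has degree exactly $\ell_{m,n}=\max\{m,n,n-m\}$ (the exponents $n$, $n-m$, $0$ span an interval of that length). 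This cleared polynomial has nonzero constant term, and its roots are precisely the nonzero roots of $f$, so the lemma follows once I prove it has no repeated root.

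Next I would characterize a repeated nonzero root $\alpha$ as a common zero of $f$ and its formal derivative $f'$; this equivalence survives the clearing of denominators, since multiplying by a power of $\lambda\ne0$ neither creates nor destroys multiplicity. Writing out $f(\alpha)=0$ and $\alpha f'(\alpha)=0$ and taking the two integer combinations that cancel one monomial at a time, I expect to extract the clean identities $\alpha^{m}=\tfrac{n-m}{n}$ and $\alpha^{n}=\tfrac{m-n}{m}$, and hence also $\alpha^{n-m}=-\tfrac{n}{m}$. These already dispose of the case $m=n$, where the first identity would force $\alpha=0$.

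The decisive step is then a modulus comparison. Taking absolute values gives $|\alpha|^{m}=\tfrac{|n-m|}{n}$, $|\alpha|^{n}=\tfrac{|n-m|}{|m|}$, and $|\alpha|^{n-m}=\tfrac{n}{|m|}$, and in each regime one selects two of these that force $|\alpha|<1$ and $|\alpha|>1$ simultaneously. For example, when $0<m<n$ the first identity (positive exponent, right side in $(0,1)$) gives $|\alpha|<1$, while the third (positive exponent, right side exceeding $1$) gives $|\alpha|>1$; the regimes $m>n>0$ and $m<0$ are handled by the analogous pair, using the sign of each exponent together with whether the right-hand side exceeds or is less than $1$. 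I expect the only genuine obstacle to be the sign bookkeeping when $m<0$, where $f$ is truly a Laurent polynomial and one must keep careful track of which exponents are negative; once the three modulus relations are correctly set up, the contradiction is immediate. This establishes separability, and with it the count $\ell_{m,n}$ of distinct nonzero roots.
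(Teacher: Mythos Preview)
Your proposal is correct and follows essentially the same route as the paper: both derive from $f(\alpha)=f'(\alpha)=0$ the identities $\alpha^{m}=1-\tfrac{m}{n}$ and $\alpha^{n}=1-\tfrac{n}{m}$, then reach a contradiction by comparing moduli in the three regimes $0<m<n$, $m>n>0$, and $m<0$. Your additional relation $\alpha^{n-m}=-\tfrac{n}{m}$ streamlines a couple of the cases, and your explicit discussion of the degree count for $\ell_{m,n}$ and the irrelevance of $\lambda=0$ fills in details the paper leaves implicit, but the core argument is the same.
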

\begin{proof}
Assume $f(\lambda)=f'(\lambda)=0$, i.e.
$$\lambda^{n+m}=\lambda^{n}+\lambda^m, \qquad  (n+m)\lambda^{n+m}=n\lambda^n+m\lambda^m.$$
Then $\lambda^m=1-m/n$, and $\lambda^n=1-n/m$. Clearly, $m\ne n$.
\begin{itemize}
  \item If $m>n$, then $|\lambda|^n<1$, implying $|\lambda|<1$, so $m/n-1=|\lambda|^m<|\lambda|^n=1-n/m$, which is absurd.
  \item If $0<m<n$, then $|\lambda|^m<1$, implying $|\lambda|<1$, so $1-m/n=|\lambda|^m<|\lambda|^n=n/m-1$, which is also absurd.
  \item If $m<0$, then $|\lambda|^m=1-m/n>1$, implying $|\lambda|<1$, but on the other hand, $|\lambda|^n=1-n/m>1$. This is a contradiction.
\end{itemize}
\end{proof}

When $\zeta=1$, direct computation gives
\begin{align*}
\mathbf{c}_{\ast2}=\left(\begin{array}{cc}
n\lambda^{m}b+nt^{-1}(r(b'a-a'b)-\lambda^mb') \\ nt^{-1}(r(b'c-a'd)+\lambda^ma')+nd-m \end{array}\right).
\end{align*}
It can be verified that
$$\mathbf{c}_{\ast2}\parallel\mathbf{a}_{\ast2}=(b,d)^{\rm tr}\Leftrightarrow a'b-b'a=(1+m\lambda^{-m}/n)bt,$$
in which case
\begin{align*}
b=\frac{(a'd'-b')a}{(1+m\lambda^{-m}/n)t}=\frac{n\lambda^{m}}{(n\lambda^{m}+m)\vartheta_n}=:b_\lambda,
\end{align*}
and $c=c_\lambda:=(\vartheta_m^{-1}-\vartheta_n)(\lambda^m+m/n)$. Note that $\lambda^{m}\ne -m/n$ is required.

Let
$$\mathcal{F}_1=\big\{\varrho(1;\lambda,b_\lambda,c_\lambda,t)\colon\lambda,t\in\mathbb{C}^\ast,\ \vartheta_m\vartheta_n\ne 0,\ \lambda^m\ne -m/n\big\}\subset\mathcal{R}^{\rm irr}(G_{m,n}).$$


\subsubsection{$\kappa_2\ne 0$}

In this case, we have $a-\vartheta_m=b=0$, so that $a=\vartheta_m$, $d=\lambda^m\zeta^m\vartheta_m^{-1}$, and then $\kappa_1=0$ implies
$f(\lambda)=0$.
Thus, alternatively, $a=\lambda^{m-n}, d=\lambda^n\zeta^m$.

It turns out that (\ref{eq:trace}) has been fulfilled.

By direct computation,
$$\mathbf{v}=\left(\begin{array}{cc} 1 & 0 \\ \zeta^{-m}(\lambda^{m-2n}\zeta^{-m}-\vartheta_{-2n})c & \lambda^{n} \end{array}\right).$$
It follows from (\ref{eq:determine-y}) that $d'=0$, and
\begin{align*}
(a',b')\parallel\big((\lambda^{m-2n}\zeta^{-m}-\vartheta_{-2n})c,\zeta^{m}\vartheta_n\big).
\end{align*}
Fix the conjugacy indeterminacy by setting $c'=1$. Then
$$b'=-t, \qquad  a'=(\vartheta_{-2n}-\lambda^{m-2n}\zeta^{-m})\zeta^{-m}\vartheta_n^{-1}tc.$$
If $\zeta=1$, then by computation, $\mathbf{c}_{12}=-nt\lambda^{2m-2n}\ne 0$, violating ${\rm rank}(\mathbf{a},\mathbf{b},\mathbf{c}_{\ast2})\le 1$.
Hence $\zeta=1$ is forbidden.

Note that
$$0\ne\kappa_2=(\zeta^m-1)\lambda^{m-n}+\lambda^m\zeta^m+\lambda^m(1-2\zeta^m)=1-\zeta^m.$$
For each $\zeta\in\Lambda_n-\Lambda_g$ and each $\lambda\in f^{-1}(0)$, let
$$\mathcal{G}_{\zeta,\lambda}=\{\varrho'(\zeta;\lambda,c,t)\colon c\in\mathbb{C},\ t\in\mathbb{C}^\ast\}\subset\mathcal{R}^{\rm irr}(G_{m,n}),$$
where $\varrho'(\zeta;\lambda,c,t)=\rho_{\mathbf{z},\mathbf{w},\mathbf{y}}$, given by $\mathbf{z}=\mathbf{d}(\lambda,\zeta)$,
$$\mathbf{w}=\left(\begin{array}{cc}  \lambda^{m-n} & 0 \\ c & \lambda^{n}\zeta^m \end{array}\right), \qquad
\mathbf{y}=\left(\begin{array}{cc}  (\vartheta_{-2n}-\lambda^{m-2n}\zeta^{-m})\zeta^{-m}\vartheta_n^{-1}tc & -t \\ 1 & 0 \end{array}\right).$$

\subsection{$\mathbf{z}=\zeta\mathbf{p}$ with $\zeta^n=1$} \label{sec:G-p}


If $c=0$, then $\mathbf{v}_{21}=(\mathbf{y}^{-1}\mathbf{z}^n\mathbf{y})_{21}$ would imply $c'=0$, too, contradicting the irreducibility of $\rho$. Hence $c\ne 0$.

Since the upper-left entry of
$$\left(\begin{array}{cc} 1 & -ac^{-1} \\ 0 & 1 \end{array}\right)\left(\begin{array}{cc} a & b \\ c & d \end{array}\right)\left(\begin{array}{cc} 1 & -ac^{-1} \\ 0 & 1 \end{array}\right)^{-1}$$
vanishes, we may just assume $a=0$ at the beginning. This also fixes the conjugacy indeterminacy.

Write $(\mathbf{a},\mathbf{b})$ explicitly as
$$\left(\begin{array}{cccc}
1-\zeta^m & b-m\zeta^m & -\zeta^{m}(\zeta^m+mc) & (d-\zeta^m)b+\zeta^mm(nc-d) \\
c & d+1-\zeta^m & (d-\zeta^m)c & d^2-\zeta^{2m}+\zeta^m(nc-d) \end{array}\right).$$
The conditions $\mathbf{a}_{\ast1}\parallel\mathbf{a}_{\ast2}$ and $\mathbf{a}_{\ast1}\parallel\mathbf{b}_{\ast1}$ respectively read
\begin{align*}
(\zeta^m-1)(d+1-\zeta^m)&=\zeta^{2m}+m\zeta^mc, \\
(\zeta^m-1)(d-\zeta^m)&=\zeta^{2m}+m\zeta^mc,
\end{align*}
which imply $\zeta^m=1$, and $c=-1/m$, so that $b=m$. Furthermore, $\mathbf{b}_{\ast2}\parallel\mathbf{a}_{\ast1}$ forces $m=-n$.

Now $\mathbf{v}=\mathbf{y}^{-1}\mathbf{z}^{n}\mathbf{y}$ becomes
$$\left(\begin{array}{cc} 1-d+d^2 & nd^3-nd^2 \\ (1-d)/n & 1+d-d^2 \end{array}\right)
=\left(\begin{array}{cc} 1+nc'd't^{-1} & n(d')^2t^{-1} \\ -n(c')^2t^{-1} & 1-nc'd't^{-1} \end{array}\right),$$
which is equivalent to
$t(d-1)=n^2(c')^2$ and $d'=nc'd$, forcing $c'\ne 0$, $d\ne 1$.

Let $\sigma(\zeta;u,v,t)$ denote $\rho_{\mathbf{z},\mathbf{w},\mathbf{y}}$, given by $\mathbf{z}=\zeta\mathbf{p}$,
\begin{align*}
\mathbf{w}=\left(\begin{array}{cc} 0 & -n \\ 1/n & n^2v^2t^{-1}+1 \end{array}\right), \qquad
\mathbf{y}=\left(\begin{array}{cc} u & n^3uv^2t^{-1}+nu-tv^{-1} \\ v & n^3v^3t^{-1}+nv \end{array}\right).
\end{align*}

When $\zeta\ne 1$, let
$$\mathcal{H}_\zeta=\big\{\sigma(\zeta;u,v,t)\colon u\in\mathbb{C},\ v,t\in\mathbb{C}^\ast\big\}\subset\mathcal{R}^{\rm irr}(G_{m,n}).$$

When $\zeta=1$,
\begin{align*}
\mathbf{c}=(\mathbf{e}+\mathbf{z}^{-n}\mathbf{w}-\mathbf{w}^2\mathbf{y}^{-1})\mathbf{z}^{-n}[n]_{\mathbf{z}}
=\left(\begin{array}{cc} 0 & (c')^{-1}-n(d+1) \\ \star & \star \end{array}\right)\left(\begin{array}{cc} n & \star \\ 0 & n \end{array}\right),
\end{align*}
where each $\star$ stands for something irrelevant. Hence $\mathbf{c}_{12}=0$ is equivalent to $nc'(d+1)=1$.
Let
$$\mathcal{H}_1=\big\{\sigma(1;u,v,t)\colon u\in\mathbb{C},\ v\in\mathbb{C}^\ast-\{1/2n\},\ t=n^3v^3/(1-2nv)\big\}.$$

\subsection{The result}

Recall $f(\lambda)=\lambda^{n+m}-\lambda^n-\lambda^m$ and $\ell_{m,n}=\max\{m,n,n-m\}$ in Lemma \ref{lem:f}.

Recall the identification of $(G_{m,n})^\wedge$ with $\mathbb{C}^\ast\times\mathbb{C}^\ast$ via $\tau\mapsto(\tau(z),\tau(y))$.

When $m\ne -n$, decomposed into connected components,
$$\mathcal{J}_3(G_{m,n})=\bigsqcup_{\zeta:\zeta^g=1}\chi(\mathcal{F}_\zeta)
\sqcup\bigsqcup_{\zeta\in\Lambda_n-\Lambda_g}\bigsqcup_{\lambda\in f^{-1}(0)}\chi(\mathcal{G}_{\zeta,\lambda}).$$

More intricate is the description of the morphism $\det_\ast$.
\begin{itemize}
  \item For $\zeta\in\Lambda_g$,
        $$\chi(\mathcal{F}_\zeta)\cong\{(\lambda,b,c)\colon \lambda\in\mathbb{C}^\ast,\ \vartheta_m\vartheta_n\ne 0, \ bc=\lambda^m(\vartheta_m^{-1}\vartheta_n^{-1}-1)\}\times\mathbb{C}^\ast,$$
        through which
        ${\det}_\ast:\chi(\mathcal{F}_\zeta)\to(G_{m,n})^\wedge$ sends $(\lambda,b,c,t)$ to $(\lambda\zeta,t).$
        Thus
        $${\det}_\ast(\chi(\mathcal{F}_\zeta))\cong\{\lambda\in\mathbb{C}^\ast\colon\vartheta_m\vartheta_n\ne 0\}\times\mathbb{C}^\ast,$$ 
        and for each $\mathfrak{a}=(\lambda\zeta,t)\in\det_\ast(\chi(\mathcal{F}_{\zeta}))$,
        $${\det}_\ast^{-1}(\mathfrak{a})\cong
        \begin{cases} \mathbb{C}^\ast, &\vartheta_m\vartheta_n\ne 1, \\
        \mathbb{C}\times\{0\}\cup\{0\}\times\mathbb{C}, &\vartheta_m\vartheta_n=1.
        \end{cases}$$
  \item $\chi(\mathcal{F}_1)\cong\{\lambda\colon \lambda\in\mathbb{C}^\ast,\ \vartheta_m\vartheta_n\ne 0,\ \lambda^m\ne -m/n\}\times\mathbb{C}^\ast,$
        through which ${\det}_\ast:\chi(\mathcal{F}_1)\to(G_{m,n})^\wedge$ sends $(\lambda,t)$ to $(\lambda,t).$
  \item For $\zeta\in\Lambda_n-\Lambda_g$ and $\lambda\in f^{-1}(0)$, we have
        $${\det}_\ast:\mathbb{C}\times\mathbb{C}^\ast\cong\chi(\mathcal{G}_{\zeta,\lambda})\to(G_{m,n})^\wedge, \qquad (c,t)\mapsto (\lambda\zeta,t).$$
        So $\det_\ast^{-1}(\mathfrak{a})\cong\mathbb{C}$ for each $\mathfrak{a}\in\det_\ast(\chi(\mathcal{G}_{\zeta,\lambda}))$.
\end{itemize}

When $m=-n$, 
$$\mathcal{J}_3(G_{-n,n})=\bigsqcup_{\zeta:\zeta^n=1}\chi(\mathcal{F}_\zeta)\sqcup\bigsqcup_{\zeta:\zeta^n=1}\chi(\mathcal{H}_\zeta).$$
\begin{itemize}
  \item For $\zeta\in\Lambda_n$, we have
        $${\det}_\ast:\mathbb{C}\times\mathbb{C}^\ast\times\mathbb{C}^\ast\cong\chi(\mathcal{H}_\zeta)\to(G_{m,n})^\wedge, \qquad (u,v,t)\mapsto(\zeta^2,t).$$
        Consequently, $\det_\ast^{-1}(\mathfrak{a})\cong\mathbb{C}\times\mathbb{C}^\ast$ for each $\mathfrak{a}\in\det_\ast(\chi(\mathcal{H}_{\zeta}))$.
  \item $\chi(\mathcal{H}_1)\cong\mathbb{C}\times(\mathbb{C}^\ast-\{1/2n\})$, through which
        ${\det}_\ast:\chi(\mathcal{H}_1)\to(G_{m,n})^\wedge$ sends $(u,v)$ to $(1,n^3v^3/(1-2nv))$. We have
        $${\det}_\ast^{-1}(\mathfrak{a})\cong\begin{cases}
        \mathbb{C}\sqcup\mathbb{C}\sqcup\mathbb{C}, &\mathfrak{a}\ne(1,-27/32), \\  \mathbb{C}\sqcup\mathbb{C}, &\mathfrak{a}=(1,-27/32).\end{cases}$$
\end{itemize}

\begin{proof}[Proof of Theorem \ref{thm:solution-G}]
Suppose there exists an isomorphism $\phi:G_{m',n'}\stackrel{\cong}\to G_{m,n}$.
We apply the naturality of $\det_\ast$ as displayed in (\ref{eq:natural-transformation}).

If there exists $\mathfrak{a}\in(G_{m',n'})^\wedge$ such that $\det_\ast^{-1}(\mathfrak{a})\cong\mathbb{C}\sqcup\mathbb{C}\sqcup\mathbb{C}$, then $m'=-n'$, and $\det_\ast^{-1}(\phi^\ast(\mathfrak{a}))\cong\mathbb{C}\sqcup\mathbb{C}\sqcup\mathbb{C}$, which implies $m=-n$.
Furthermore,
\begin{align*}
n-1&=\#\big\{\mathcal{C}\in\pi_0(\mathcal{J}_3(G_{m,n}))\colon\mathcal{C}\cong\mathbb{C}\times\mathbb{C}^\ast\times\mathbb{C}^\ast\big\}  \\
&=\#\big\{\mathcal{C}'\in\pi_0(\mathcal{J}_3(G_{m',n'}))\colon\mathcal{C}'\cong\mathbb{C}\times\mathbb{C}^\ast\times\mathbb{C}^\ast\big\}=n'-1.
\end{align*}
Hence $n=n'$.

Now suppose $m\ne -n$ and $m'\ne -n'$. Let $g'={\rm gcd}(m',n')$. 
\begin{itemize}
  \item The number of 3-dimensional components of $\mathcal{J}_3(G_{m,n})$ coincides with that of $\mathcal{J}_3(G_{m',n'})$, so $g=g'$.
  \item For each 3-dimensional component $\mathcal{C}\subset\mathcal{J}_3(G_{m,n})$,
        $$\mathbb{Z}^{n+|m|-g+2}\cong H_1({\det}_\ast(\mathcal{C}))\cong H_1({\det}_\ast(\phi^\ast\mathcal{C}))\cong\mathbb{Z}^{n'+|m'|-g'+2},$$
        implying $n+|m|=n'+|m'|$.
        Moreover,
        \begin{align*}
        \ell_{m,n}&=\#\pi_0(\{\mathfrak{a}\in{\det}_\ast(\mathcal{C})\colon{\det}_\ast^{-1}(\mathfrak{a})\cong\mathbb{C}\times\{0\}\cup\{0\}\times\mathbb{C}\}) \\
        &=\#\pi_0(\{\mathfrak{b}\in{\det}_\ast(\phi^\ast\mathcal{C})\colon{\det}_\ast^{-1}(\mathfrak{b})\cong\mathbb{C}\times\{0\}\cup\{0\}\times\mathbb{C}\})=\ell_{m',n'}.
        \end{align*}
  \item Comparing the numbers of components which are isomorphic to $\mathbb{C}\times\mathbb{C}^\ast$, we obtain $(n-g)\ell_{m,n}=(n'-g')\ell_{m',n'}$.
\end{itemize}
From all of these, we deduce $n=n'$ and $m=m'$.
\end{proof}

\section{The $H$ family}

Use $u=yz$ to present $H$ alternatively as
\begin{align*}
H_{m,n}=
\langle x,u,z\mid z^{-n-1}x^{-m}z^nx^{m+1}=u^{-1}z^{-1}u\rangle.
\end{align*}
Given $\mathbf{z},\mathbf{x},\mathbf{u}\in{\rm GL}(2,\mathbb{C})$, there exists a representation $H_{m,n}\to{\rm GL}(2,\mathbb{C})$ sending $z,x,u$ respectively to $\mathbf{z},\mathbf{x},\mathbf{u}$ if and only if
\begin{align}
\mathbf{v}:=\mathbf{z}^{-n-1}\mathbf{x}^{-m}\mathbf{z}^n\mathbf{x}^{m+1}=\mathbf{u}^{-1}\mathbf{z}^{-1}\mathbf{u}; \label{eq:H-relation}
\end{align}
when this holds, denote the unique representation by $\rho_{\mathbf{z},\mathbf{x},\mathbf{u}}$. It is irreducible if and only if $\mathbf{z},\mathbf{x},\mathbf{u}$ do not share an eigenvector.

Suppose $\rho=\rho_{\mathbf{z},\mathbf{x},\mathbf{u}}$ is irreducible. Clearly, $\mathbf{z}\ne\mathbf{e}$.

If $\mathfrak{d}:H_{m,n}\to V_\rho$ is a derivation, with $\mathfrak{d}(x)=\xi_1, \mathfrak{d}(u)=\xi_2, \mathfrak{d}(z)=\xi_3$,
then
\begin{align*}
\mathfrak{d}(z^{-n-1}x^{-m}z^nx^{m+1})&=\mathbf{z}^{-n-1}\mathbf{x}^{-m}\mathbf{g}\xi_1+\mathbf{h}\xi_3,  \\
\mathfrak{d}(u^{-1}z^{-1}u)&=\mathbf{u}^{-1}(\mathbf{z}^{-1}-\mathbf{e})\xi_2-\mathbf{u}^{-1}\mathbf{z}^{-1}\xi_3,
\end{align*}
where
\begin{align*}
\mathbf{g}&=\mathbf{z}^n[m+1]_{\mathbf{x}}-[m]_{\mathbf{x}}, \\
\mathbf{h}&=\mathbf{z}^{-n-1}\mathbf{x}^{-m}[n]_{\mathbf{z}}+[-n-1]_{\mathbf{z}}.
\end{align*}
Hence the space $\mathfrak{D}_\rho$ of derivations $H_{m,n}\to V_\rho$ can be identified with that of triples $(\xi_1,\xi_2,\xi_3)$ satisfying
$$\mathbf{u}\mathbf{z}^{-n-1}\mathbf{x}^{-m}\mathbf{g}\xi_1+(\mathbf{e}-\mathbf{z}^{-1})\xi_2+(\mathbf{z}^{-1}+\mathbf{u}\mathbf{h})\xi_3=0.$$

Similarly as in Section 2, $\chi_\rho\in\mathcal{J}_3(H_{m,n})$ if and only if $\dim\mathfrak{D}_\rho\ge 5$, which is equivalent to
\begin{align}
{\rm rank}(\mathbf{u}\mathbf{z}^{-n-1}\mathbf{x}^{-m}\mathbf{g},\mathbf{e}-\mathbf{z}^{-1},\mathbf{z}^{-1}+\mathbf{u}\mathbf{h})\le 1. \label{ineq:H-rank}
\end{align}
Necessarily, $\det(\mathbf{z}-\mathbf{e})=0$. Thus up to conjugacy we may assume that either $\mathbf{z}=\mathbf{d}(\lambda,1)$ with $\lambda \ne 1$, or $\mathbf{z}=\mathbf{p}$.

Suppose
$$\mathbf{x}=\left(\begin{array}{cc} a & b \\ c & d \end{array}\right), \qquad
\mathbf{u}=\left(\begin{array}{cc} a' & b' \\ c' & d' \end{array}\right),$$
with $ad-bc=1$ and $a'd'-b'c'=t\ne 0$.

As is necessary for (\ref{ineq:H-rank}), ${\rm rank}(\mathbf{e}-\mathbf{z}^{-1},\mathbf{z}^{-1}+\mathbf{u}\mathbf{h})\le 1$, which is the same as
\begin{align}
(c',d')\mathbf{h}=(0,-1).  \label{eq:H-rank=1}
\end{align}

\begin{rmk} \label{rmk:H-condition}
\rm For a reason similar to that in Remark \ref{rmk:G-condition}, (\ref{eq:H-rank=1}) will be also sufficient as long as $\mathbf{x}-\mathbf{e}$ is invertible.
\end{rmk}

Assume that (\ref{eq:H-relation}), (\ref{ineq:H-rank}) hold, so does (\ref{eq:H-rank=1}). As a consequence of (\ref{eq:H-relation}),
\begin{align}
{\rm tr}(\mathbf{v})={\rm tr}(\mathbf{z}). \label{eq:trace-v=z}
\end{align}

Let $\mu+\mu^{-1}=r=a+d$. For $k\in\mathbb{Z}$, put
\begin{align}
\gamma_k=\begin{cases}
(\mu^k-\mu^{-k})/(\mu-\mu^{-1}), &\mu\notin\{\pm 1\}, \\
k\mu^{k-1}, &\mu\in\{\pm 1\}.
\end{cases}  \label{eq:gamma-k}
\end{align}
We have
\begin{align}
\mathbf{x}^j=\gamma_j\mathbf{x}-\gamma_{j-1}\mathbf{e}, \qquad j\in\mathbb{Z}.   \label{eq:x-jth}
\end{align}
The following identities are useful:
\begin{align}
\gamma_{k+1}-r\gamma_k+\gamma_{k-1}=0, \nonumber  \\
\gamma_{k+1}^2+\gamma_k^2-r\gamma_{k+1}\gamma_k=1, \label{eq:identity-2} \\
\gamma_{k}^2-\gamma_{k+1}\gamma_{k-1}=1, \nonumber  \\
\gamma_{k+1}\gamma_k-\gamma_{k+2}\gamma_{k-1}=r. \nonumber
\end{align}
An alternative of (\ref{eq:identity-2}) is
\begin{align}
(2-r)\gamma_{k+1}\gamma_k=(1+\gamma_k-\gamma_{k+1})(1+\gamma_{k+1}-\gamma_k). \label{eq:identity-2'}
\end{align}

\begin{lem}
$\mathbf{z}\ne\mathbf{p}$.
\end{lem}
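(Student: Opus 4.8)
```latex
The plan is to prove $\mathbf{z}\ne\mathbf{p}(1)$ by contradiction, assuming $\mathbf{z}=\mathbf{p}(1)$ and deriving an obstruction from the necessary condition (\ref{eq:H-rank=1}) together with the full rank condition (\ref{ineq:H-rank}). First I would work out $\mathbf{z}^k=\mathbf{p}(1)^k$ explicitly; since $\mathbf{p}(1)$ is unipotent with a single Jordan block, one has $\mathbf{z}^k=\left(\begin{array}{cc} 1 & k \\ 0 & 1 \end{array}\right)$ for all $k\in\mathbb{Z}$. This makes $[k]_{\mathbf{z}}=\sum_{j}\mathbf{z}^j$ and hence $\mathbf{h}=\mathbf{z}^{-n-1}\mathbf{x}^{-m}[n]_{\mathbf{z}}+[-n-1]_{\mathbf{z}}$ completely computable as $2\times 2$ matrices whose entries are polynomials in $n$ and in the entries of $\mathbf{x}^{-m}$ (equivalently, in $\gamma_{-m},\gamma_{-m-1}$ and $a,b,c,d$).

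Next I would impose (\ref{eq:H-rank=1}), namely $(c,d)\mathbf{h}=(0,-1)$, which gives two scalar equations linear in the row vector $(c,d)$ once $\mathbf{h}$ is known. The key structural observation to exploit is that $\mathbf{z}=\mathbf{p}(1)$ fixes the line spanned by $(1,0)^t$, so the eigenvector obstruction to irreducibility becomes delicate: if analysing (\ref{eq:H-rank=1}) forces $\mathbf{x}$ and $\mathbf{u}$ to also preserve this line (i.e. forces $c=0$ and $c'=0$, or more precisely a common eigenvector), then $\rho$ cannot be irreducible, contradicting our standing assumption. I expect the computation to reduce the first component of $(c,d)\mathbf{h}=(0,-1)$ to a relation showing $c$ must vanish, or else to pin down the lower-left entries of $\mathbf{x}$ and $\mathbf{u}$ in a way incompatible with $ad-bc=1$ and $\det\mathbf{u}\ne 0$.

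The likely mechanism, which I would verify by direct substitution, is this: with $\mathbf{z}$ unipotent, the relation (\ref{eq:H-relation}) reads $\mathbf{v}=\mathbf{u}^{-1}\mathbf{z}^{-1}\mathbf{u}$, so $\mathbf{v}$ is conjugate to $\mathbf{z}^{-1}$ and is therefore also unipotent with the single eigenvalue $1$; in particular ${\rm tr}(\mathbf{v})=2$ and $\mathbf{v}\ne\mathbf{e}$. Computing ${\rm tr}(\mathbf{v})={\rm tr}(\mathbf{z}^{-n-1}\mathbf{x}^{-m}\mathbf{z}^n\mathbf{x}^{m+1})$ using $\mathbf{z}^k=\left(\begin{array}{cc} 1 & k \\ 0 & 1 \end{array}\right)$ and $\mathbf{x}^k=\gamma_k\mathbf{x}-\gamma_{k-1}\mathbf{e}$ yields a polynomial constraint; I expect this, combined with the $(2,1)$-entry condition from (\ref{eq:H-relation}) forcing $c'$ in terms of $c$, to collapse so that the only solutions compatible with $\dim\mathfrak{D}_\rho\ge 5$ have $c=c'=0$, giving the common eigenvector $(1,0)^t$ and hence reducibility.

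The main obstacle will be the bookkeeping in the unipotent case: because $\mathbf{z}^k$ has entries linear in $k$ rather than geometric, the quantities $[k]_{\mathbf{z}}$ grow quadratically in $k$ and the entries of $\mathbf{h}$ mix terms of several degrees in $n$, so isolating the precise algebraic identity that forces $c=0$ (while carefully tracking the case $\mathbf{x}-\mathbf{e}$ non-invertible, where Remark \ref{rmk:H-condition} does not apply and one must return to the full condition (\ref{ineq:H-rank})) is where the real work lies. I would organise this by first handling the generic subcase where $\mathbf{x}-\mathbf{e}$ is invertible via (\ref{eq:H-rank=1}) alone, then separately treating $\mathbf{x}-\mathbf{e}$ singular by direct inspection of the rank-$\le 1$ requirement on the three-matrix array, showing in each subcase that irreducibility fails.
```
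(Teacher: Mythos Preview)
Your overall framework---argue by contradiction, exploit the unipotence of $\mathbf{z}=\mathbf{p}(1)$, invoke the rank conditions (\ref{ineq:H-rank}) and (\ref{eq:H-rank=1}), and split on whether $\mathbf{x}-\mathbf{e}$ is invertible (i.e.\ $r=2$ versus $r\ne 2$)---matches the paper's. But the mechanism you anticipate is inverted, and a plan built on it will not reach the contradiction.

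You expect the constraints to force $c=0$ (the $(2,1)$-entry of $\mathbf{x}$), producing a common eigenvector. In fact $c\ne 0$ is the \emph{opening} observation: if $c=0$ then $\mathbf{v}$ is upper triangular, and since $(\mathbf{u}^{-1}\mathbf{z}^{-1}\mathbf{u})_{21}$ is a nonzero multiple of $(c')^2$, equation (\ref{eq:H-relation}) already gives $c'=0$ and hence reducibility. The substantive case is therefore $c\ne 0$, after which one normalizes $a=0$. The contradiction is not obtained by driving $c$ back to zero. Instead the paper extracts two scalar relations: ${\rm tr}(\mathbf{v})=2$ gives $(n{+}1)n\gamma_{m+1}\gamma_m c^{2}-c=2-r$, and the rank-one condition on $\mathbf{g}$ (via $\det(\mathbf{g}(\mathbf{x}-\mathbf{e}))=0$) gives $1+\gamma_m-\gamma_{m+1}=(2-r)/(nc)$. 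For $r=2$ the trace relation alone pins down $c$, and a direct check shows $\det(\mathbf{g})\ne 0$, violating (\ref{ineq:H-rank}). For $r\ne 2$, combining the two relations with the identity (\ref{eq:identity-2'}) forces $\gamma_{m+1}=\gamma_m$ and $2-r=nc$, hence $nc\gamma_m^{2}=1$; then $\mathbf{v}_{21}=c-n\gamma_m^{2}c^{2}=0$, so $c'=0\ne d'$, and (\ref{eq:H-rank=1}) now demands $\mathbf{h}_{21}=0$---yet one computes $\mathbf{h}_{21}=-n\gamma_m c\ne 0$. The obstruction is this clash in $\mathbf{h}_{21}$, with $c$ nonzero throughout. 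Your sketch does not locate the two scalar identities or the pivotal use of (\ref{eq:identity-2'}), and your stated endpoint ($c=c'=0$) is not what actually happens.
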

\begin{proof}
Assume $\mathbf{z}=\mathbf{p}$.
Similarly as in Section \ref{sec:G-p}, we have $c\ne 0$, and up to conjugacy can assume $a=0$, so that $bc=-1$ and $d=r$.

Using (\ref{eq:x-jth}) we can compute
\begin{align*}
\mathbf{v}&=\left(\begin{array}{cc}
(n+1)n\gamma_{m+1}\gamma_mc^2+(n\gamma_{m+1}^2-n-1)c & \star \\ c-n\gamma_{m+1}\gamma_mc^2 & r-n\gamma_{m+2}\gamma_mc \end{array}\right), \\
\mathbf{g}(\mathbf{x}-\mathbf{e})&=\left(\begin{array}{cc}  \gamma_{m-1}-\gamma_m+n\gamma_{m+1}c & (\gamma_{m+1}-\gamma_m)b+n(\gamma_{m+2}-1) \\
(\gamma_{m+1}-\gamma_m)c & \gamma_{m+2}-\gamma_{m+1}   \end{array}\right),
\end{align*}
where the $\star$ stands for something irrelevant.
The reason for considering $\mathbf{g}(\mathbf{x}-\mathbf{e})$ is that $\mathbf{g}(\mathbf{x}-\mathbf{e})$ has a simpler expression than $\mathbf{g}$.

From (\ref{eq:trace-v=z}) we obtain
\begin{align}
(n+1)n\gamma_{m+1}\gamma_mc^2-c=2-r. \label{eq:H-z-np-1}
\end{align}
It follows from $\det(\mathbf{g}(\mathbf{x}-\mathbf{e}))=0$, which is required by (\ref{ineq:H-rank}), that
\begin{align}
1+\gamma_m-\gamma_{m+1}=\frac{2-r}{nc}. \label{eq:H-z-np-2}
\end{align}
\begin{itemize}
  \item If $r=2$, then $\gamma_k=k$, and
        $$\mathbf{g}=\left(\begin{array}{cc} 1-m+nm(m+1)c/2 & mb+n(m+1)(m+2)/2 \\ mc & m+1 \end{array}\right),$$
        so $\det(\mathbf{g})=1-nm(m+1)c/2\ne 0$ by (\ref{eq:H-z-np-1}). This violates (\ref{ineq:H-rank}).
  \item If $r\ne 2$, then
        \begin{align*}
        (2-r+c)(2-r)&\stackrel{(\ref{eq:H-z-np-1})}=(n+1)nc^2\cdot(2-r)\gamma_{m+1}\gamma_m \\
        &\stackrel{(\ref{eq:identity-2'})}=(n+1)nc^2(1+\gamma_{m+1}-\gamma_m)(1+\gamma_{m}-\gamma_{m+1}) \\
        &\stackrel{(\ref{eq:H-z-np-2})}=(n+1)c(1+\gamma_{m+1}-\gamma_m)(2-r).
        \end{align*}
        Hence $$1+\gamma_{m+1}-\gamma_m=\frac{2-r+c}{(n+1)c},$$
        which together with (\ref{eq:H-z-np-2}) implies $2-r=nc$ and $\gamma_{m+1}=\gamma_m$. By (\ref{eq:identity-2}), $nc\gamma_m^2=1$. Then
        $$-n(c')^2(a'd'-b'c')^{-1}=(\mathbf{u}^{-1}\mathbf{z}^{-1}\mathbf{u})_{21}=\mathbf{v}_{21}=0,$$
        so $c'=0\ne d'$. By (\ref{eq:H-rank=1}), $\mathbf{h}_{21}=0$. On the other hand, as we can compute, $\mathbf{h}_{21}=-n\gamma_mc\ne 0$. This is a contradiction.
\end{itemize}

Therefore, $\mathbf{z}\ne\mathbf{p}$.
\end{proof}

From now on, assume $\mathbf{z}=\mathbf{d}(\lambda,1)$ with $\lambda\ne 1$. As in Section 2, let $\vartheta_k=\lambda^k-1$.

Computing directly,
\begin{align*}
\mathbf{v}_{11}&=\lambda^{-1}a+\lambda^{-n-1}\vartheta_n\gamma_{m+1}\gamma_mbc, \\
\mathbf{v}_{12}&=\big(\lambda^{-1}\gamma_{m+1}(\gamma_{m+1}-\gamma_ma)-\lambda^{-n-1}\gamma_m(\gamma_{m+1}d-\gamma_m)\big)b, \\
\mathbf{v}_{21}&=\big(\gamma_{m+1}(\gamma_{m+1}-\gamma_md)-\lambda^{n}\gamma_m(\gamma_{m+1}a-\gamma_m)\big)c, \\
\mathbf{v}_{22}&=d-\vartheta_n\gamma_{m+1}\gamma_mbc.
\end{align*}
The condition (\ref{eq:trace-v=z}) reads
\begin{align}
\lambda^{-1}(a-1)+(d-1)=\lambda^{-n-1}\vartheta_{n+1}\vartheta_n\gamma_{m+1}\gamma_mbc.  \label{eq:H-trace}
\end{align}

We can write $\mathbf{u}\mathbf{v}=\mathbf{z}^{-1}\mathbf{u}$ (which is equivalent to (\ref{eq:H-relation})) as
\begin{align}
(a',b')(\mathbf{v}-\lambda^{-1}\mathbf{e})=0, \label{eq:determine-u-1} \\
(c',d')(\mathbf{v}-\mathbf{e})=0. \label{eq:determine-u-2}
\end{align}
As a consequence,
\begin{align}
0&=d'(a',b')(\mathbf{v}-\lambda^{-1}\mathbf{e})=(t,0)(\mathbf{v}-\lambda^{-1}\mathbf{e})+b'(c',d')(\mathbf{v}-\lambda^{-1}\mathbf{e}) \nonumber \\
&=t(\mathbf{v}_{11}-\lambda^{-1},\mathbf{v}_{12})+b'(1-\lambda^{-1})(c',d').  \label{eq:determine-u-3}
\end{align}

Now
\begin{align*}
\mathbf{h}=\left(\begin{array}{cc}  \lambda^{-n-1}([n]_\lambda(\gamma_{m+1}-\gamma_ma)-[n+1]_\lambda) & -n\lambda^{-n-1}\gamma_mb \\
-[n]_\lambda\gamma_mc & n(\gamma_{m+1}-\gamma_md)-n-1 \end{array}\right).
\end{align*}
So (\ref{eq:H-rank=1}) reads
\begin{align}
\lambda^{-n-1}\big(\vartheta_n(\gamma_ma-\gamma_{m+1})+\vartheta_{n+1}\big)c'+\vartheta_n\gamma_mcd'=0, \label{eq:H-1} \\
n\lambda^{-n-1}\gamma_mbc'+(n(\gamma_md-\gamma_{m+1})+n+1)d'=1.  \label{eq:H-2}
\end{align}

\begin{lem} \label{lem:bc=0}
If $bc=0$, then $a=\lambda=d^{-1}$, and $c=0\ne b$.
\end{lem}
\begin{proof}
If $b=0$, then $\mathbf{v}_{12}=0$, and by (\ref{eq:determine-u-3}), $b'd'=0$. Since the irreducibility of $\rho$ requires $b'\ne 0$, we have $d'=0$. But this contradicts (\ref{eq:H-2}).

Hence $b\ne 0=c$, and the irreducibility of $\rho$ implies $c'\ne 0$. Then by (\ref{eq:determine-u-2}), $\mathbf{v}_{11}=1$, which implies $a=\lambda$.
\end{proof}

Consequently, the conjugacy indeterminacy of $\rho$ can be fixed by setting $b=1$, 
as we assume from now on. Then $c=ad-1$.

\subsection{$r=2$} \label{sec:r=2}

Write $a=1+s$ and $d=1-s$. Then $c=-s^2$. By Lemma \ref{lem:bc=0}, $s\ne 0$.

By (\ref{eq:H-trace}),
\begin{align}
(m+1)m\vartheta_{n+1}\vartheta_ns=\vartheta_{n+1}-\vartheta_n.  \label{eq:r=2-1}
\end{align}

Note that for each $j\in\mathbb{Z}$,
$$\mathbf{x}^j=\left(\begin{array}{cc} 1+js & j \\ -js^2 & 1-js \end{array}\right).$$
By direct computation,
\begin{align*}
\mathbf{g}=\left(\begin{array}{cc} (m+1)m\vartheta_ns/2+(m+1)\vartheta_n+1+ms & (m+1)m\vartheta_n/2+m \\ -ms^2 & 1-ms\end{array}\right).
\end{align*}
Then $\det(\mathbf{g})=0$, as required by (\ref{ineq:H-rank}), is equivalent to
\begin{align}
(m+1)m\vartheta_ns=2(m+1)\vartheta_n+2. \label{eq:r=2-2}
\end{align}

We can replace (\ref{eq:r=2-1}), (\ref{eq:r=2-2}) by
\begin{align}
(2m+1)\vartheta_{n+1}\vartheta_n+\vartheta_{2n+1}=0, \label{eq:r=2-3} \\
s=\frac{\vartheta_{n+1}-\vartheta_n}{(m+1)m\vartheta_{n+1}\vartheta_n}
=\frac{2(\vartheta_n-\vartheta_{n+1})}{m(\vartheta_{n+1}+\vartheta_n)},  \label{eq:r=2-4}
\end{align}
then use (\ref{eq:H-1}), (\ref{eq:H-2}) to find
\begin{align}
c'=-\frac{2\lambda^{n+1}}{(2n+1)(m+1)m\vartheta_{n+1}}, \qquad d'=\frac{1}{2n+1}  \label{eq:c'd'-when-r=2}
\end{align}
and use (\ref{eq:determine-u-3}) to find
\begin{align}
a'=\frac{(2n+1)t}{1-\lambda}\Big(\frac{1-\lambda}{m\vartheta_{-n}-1}-\frac{2\lambda^{n+1}}{(m+1)m}\Big), \qquad  b'=\frac{(2n+1)t}{1-\lambda}.   \label{eq:a'b'-when-r=2}
\end{align}

Conversely, suppose (\ref{eq:r=2-3})--(\ref{eq:a'b'-when-r=2}) hold. Then (\ref{eq:determine-u-1}), (\ref{eq:determine-u-2}) are satisfied; so is (\ref{eq:H-relation}). Actually (\ref{ineq:H-rank}) is also fulfilled. Now that
${\rm rank}(\mathbf{e}-\mathbf{z}^{-1},\mathbf{z}^{-1}+\mathbf{u}\mathbf{h})\le 1$ (which is equivalent to (\ref{eq:H-rank=1}))
has been ensured by (\ref{eq:c'd'-when-r=2}), to show (\ref{ineq:H-rank}), it remains to show
${\rm rank}(\mathbf{u}\mathbf{z}^{-n-1}\mathbf{x}^{-m}\mathbf{g},\mathbf{e}-\mathbf{z}^{-1})\le 1$.

To this end, let
$$\mathbf{k}=\mathbf{x}^m\mathbf{z}^{n+1}\mathbf{u}^{-1}(\mathbf{e}-\mathbf{z}^{-1})\mathbf{u}
\stackrel{(\ref{eq:H-relation})}=\mathbf{x}^m\mathbf{z}^{n+1}-\mathbf{z}^n\mathbf{x}^{m+1}.$$
Clearly, $\det(\mathbf{k})=0$. It is easy to verified that
$$\mathbf{k}_{\ast2}=\left(\begin{array}{cc} -(m+1)\vartheta_n-1 \\ s \end{array}\right)\parallel\mathbf{g}_{\ast1}.$$
Hence
${\rm rank}(\mathbf{g},\mathbf{k})\le 1$, which is equivalent to
${\rm rank}(\mathbf{u}\mathbf{z}^{-n-1}\mathbf{x}^{-m}\mathbf{g},\mathbf{e}-\mathbf{z}^{-1})\le 1$.

\subsection{$r\ne 2$}

By Remark \ref{rmk:H-condition}, the condition $\dim\mathfrak{D}_\rho\ge 5$ is equivalent to $(c',d')\mathbf{h}=(0,-1)$.

Let $\omega=1+\gamma_m-\gamma_{m+1}$.

\subsubsection{$\mathbf{v}_{22}\ne 1$}

By (\ref{eq:determine-u-2}), $(c',d')\parallel(1-\mathbf{v}_{22},\mathbf{v}_{12})$.
Direct computation reduces (\ref{eq:H-1}) to
\begin{align}
\vartheta_n\gamma_m(a-1)+(\vartheta_n\gamma_{m+1}-\vartheta_{n+1})(d-1)=\vartheta_{n+1}\vartheta_n\gamma_{m+1}\gamma_m(1-ad).  \label{eq:r-ne2}
\end{align}

\begin{lem} \label{lem:nonzero}
$\vartheta_{n+1}\vartheta_n\ne 0$, and $(\gamma_{m+1},\gamma_m)\ne(1,0), (0,-1)$.
\end{lem}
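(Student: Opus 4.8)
The lemma asserts two things for the case $\mathbf{z}=\mathbf{d}(\lambda,1)$ with $\lambda\neq 1$: first that $\vartheta_n\vartheta_{n+1}\neq 0$, and second that $(\gamma_{m+1},\gamma_m)$ avoids the two forbidden values $(1,0)$ and $(0,-1)$. The plan is to argue each claim by deriving a contradiction with the irreducibility of $\rho$ or with the defining relation, using the explicit entries of $\mathbf{v}$ and $\mathbf{h}$ computed above. Since $\lambda\neq 1$ is standing hypothesis, $\vartheta_1=\lambda-1\neq 0$ already, so the content of $\vartheta_n\vartheta_{n+1}\neq 0$ is that neither $\lambda^n=1$ nor $\lambda^{n+1}=1$ can occur for an irreducible $\rho$ lying in the jump locus.

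\textbf{First claim, $\vartheta_n\vartheta_{n+1}\neq 0$.} I would suppose $\vartheta_n=0$, i.e. $\lambda^n=1$ (with $\lambda\neq 1$, so $n\geq 2$), and read off the consequences. With $\vartheta_n=0$ the formulas for $\mathbf{v}$ collapse dramatically: $\mathbf{v}_{11}=\lambda^{-1}a$, $\mathbf{v}_{22}=d$, and the off-diagonal entries simplify as well. I expect that feeding this into the trace condition (\ref{eq:H-trace}) and into $\mathbf{h}$ forces either $\mathbf{v}$ to become reducible in a way incompatible with (\ref{eq:H-rank=1}), or makes $\mathbf{h}$ degenerate so that no admissible $(c',d')$ exists. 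A clean route is to examine $\mathbf{h}$ when $\vartheta_n=0$: the entries $\mathbf{h}_{12}=-n\lambda^{-n-1}\gamma_m b$ and the $(1,1),(2,1)$ entries all carry a factor $\vartheta_n$ or $[n]_\lambda$, and since $\lambda^n=1$ with $\lambda\neq 1$ gives $[n]_\lambda=0$, several entries vanish, making (\ref{eq:H-1})--(\ref{eq:H-2}) impossible to solve with $c'\neq 0$. The case $\vartheta_{n+1}=0$ is handled symmetrically, using that then $[n+1]_\lambda=0$.

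\textbf{Second claim, the two forbidden values.} Here I would substitute each candidate pair directly. If $(\gamma_{m+1},\gamma_m)=(1,0)$, then $\mathbf{x}^{m}=\gamma_m\mathbf{x}-\gamma_{m-1}\mathbf{e}=-\gamma_{m-1}\mathbf{e}$ is scalar (using $\gamma_{m-1}=r\gamma_m-\gamma_{m+1}=-1$), and likewise $\mathbf{x}^{m+1}=\gamma_{m+1}\mathbf{x}-\gamma_m\mathbf{e}=\mathbf{x}$; then the relation (\ref{eq:H-relation}) forces $\mathbf{v}=\mathbf{z}^{-n-1}\mathbf{x}^{-m}\mathbf{z}^n\mathbf{x}^{m+1}$ to reduce to a conjugate of $\mathbf{z}^{-1}$ by something commuting too much, and I expect $\mathbf{v}$ to become diagonal or to share an eigenvector forced by the scalar power of $\mathbf{x}$, contradicting irreducibility of $\{\mathbf{z},\mathbf{x},\mathbf{u}\}$. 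The pair $(0,-1)$ similarly makes a power of $\mathbf{x}$ scalar: $\gamma_{m+1}=0$ means $\mathbf{x}^{m+1}=-\gamma_m\mathbf{e}$ is scalar, and $\gamma_m=-1$ pins down $r$ via Remark \ref{rmk:gamma-r}. In both cases the cleanest contradiction is that a scalar power of $\mathbf{x}$ decouples the relation so badly that (\ref{ineq:H-rank}) cannot hold together with $c'\neq 0$, or that $\mathbf{v}=\mathbf{z}^{-1}$ directly, forcing $\mathbf{u}$ to commute with $\mathbf{z}$ and hence share the eigenvectors of $\mathbf{z}=\mathbf{d}(\lambda,1)$.

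\textbf{Expected obstacle.} The routine part is plugging values into the already-computed $\mathbf{v}$, $\mathbf{g}$, $\mathbf{h}$. The delicate point is organizing the $(\gamma_{m+1},\gamma_m)=(1,0)$ and $(0,-1)$ arguments so they cover all values of $r$ uniformly, including the degenerate case $r=2$ (where $\mu=1$ and $\gamma_k=k$) that is treated separately in Section \ref{sec:r=2}; I would check that neither forbidden pair is compatible with $\gamma_k=k$, then handle $r\neq 2$ via (\ref{eq:identity-2}) and Remark \ref{rmk:gamma-r}. Keeping track of when $\gamma_m$ or $\gamma_{m+1}$ vanishing makes an entry of $\mathbf{v}$ or $\mathbf{h}$ degenerate — rather than merely simplifying it — is where the care is needed, since the lemma is precisely excluding the degeneracies that would break the subsequent analysis.
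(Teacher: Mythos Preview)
Your proposal misses the lemma's context and contains a genuine gap. The lemma sits inside the subsection $r\ne 2$, \emph{after} the standing assumption $\mathbf{v}_{22}\ne 1$ has been made and equation (\ref{eq:r-ne2}) has been derived from it. The paper's proof leans entirely on the pair (\ref{eq:H-trace}), (\ref{eq:r-ne2}): plugging $\vartheta_n=0$, or $(\gamma_{m+1},\gamma_m)=(1,0)$, or $(0,-1)$ into those two linear relations in $a-1$, $d-1$ forces $a=d=1$, hence $r=2$, contradicting the section hypothesis. You never invoke (\ref{eq:r-ne2}) and instead try to argue via $\mathbf{h}$ and irreducibility; for the forbidden pairs your sketch stops at ``I expect'' without producing a contradiction.

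The serious problem is $\vartheta_{n+1}=0$. Your claim that it is ``handled symmetrically'' to $\vartheta_n=0$ is wrong: when $\lambda^{n+1}=1$ one has $[n+1]_\lambda=0$, but $[n]_\lambda\ne 0$, and (\ref{eq:H-1}) becomes $c'(\gamma_m a-\gamma_{m+1})+d'\gamma_m c=0$, which does \emph{not} force $c'=0$. There is no symmetry in $\mathbf{h}$ between $n$ and $n+1$. The paper's argument here is of a different nature: assuming $\vartheta_{n+1}=0$ one uses (\ref{eq:H-trace}) and (\ref{eq:r-ne2}) to obtain $a-1+\lambda(d-1)=0$ and $\gamma_{m+1}=\lambda\gamma_m$, solves for $a,d$ in terms of $r,\lambda$, computes $\gamma_m^2=(\lambda^2-\lambda r+1)^{-1}$ from (\ref{eq:identity-2}), and then checks directly that $\mathbf{v}_{22}=1$, contradicting the standing hypothesis. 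This computation is the heart of the lemma and is absent from your plan.
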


\begin{proof}
From (\ref{eq:H-trace}), (\ref{eq:r-ne2}) it is clear that $\vartheta_n\ne0$ and $(\gamma_{m+1},\gamma_m)\ne(1,0), (0,-1)$.

Assume $\vartheta_{n+1}=0$. Then $\vartheta_n=\lambda^{-1}-1$, and by (\ref{eq:H-trace}), (\ref{eq:r-ne2}),
$$a-1+\lambda(d-1)=0, \qquad \gamma_{m+1}=\lambda\gamma_m.$$
The first equation implies
$$a=1+\frac{r-2}{\lambda^{-1}-1}, \qquad d=1+\frac{r-2}{1-\lambda};$$
the second one together with (\ref{eq:identity-2}) implies
$\gamma_m^2=(\lambda^2-\lambda r+1)^{-1}$.
Then
\begin{align*}
\mathbf{v}_{22}-1&=d-1-\lambda\vartheta_n\gamma_m^2(ad-1) \\
&=\frac{r-2}{1-\lambda}-\frac{1-\lambda}{\lambda^2-\lambda r+1}\left(r-2-\frac{\lambda(r-2)^2}{(1-\lambda)^2}\right)=0,
\end{align*}
contradicting the assumption.
\end{proof}

This together with (\ref{eq:identity-2'}) implies $\omega\ne 0$.

A comparison of (\ref{eq:r-ne2}) with (\ref{eq:H-trace}) results in
$$\vartheta_n\gamma_m(a-1)+(\vartheta_n\gamma_{m+1}-\vartheta_{n+1})(d-1)=-\lambda^n(a-1)-\lambda^{n+1}(d-1),$$
leading us to
\begin{align}
a=1+(\vartheta_n^{-1}+\gamma_{m+1})(2-r)\omega^{-1}, \qquad
d=1+(\lambda^n\vartheta_n^{-1}+\gamma_{m})(r-2)\omega^{-1}. \label{eq:H-a-d-1}
\end{align}
Then (\ref{eq:H-trace}) can be converted into
\begin{align}
(\lambda^n\gamma_{m+1}-\gamma_m)((\gamma_{m+1}+\gamma_m)\vartheta_{n+1}\vartheta_n+\vartheta_{2n+1})=0. \label{eq:H-CJL0}
\end{align}
Here are the details: the left-hand-side of (\ref{eq:H-trace}) equals
$$(\lambda^{-1}\vartheta_{n+1}\vartheta_n^{-1}+\gamma_m-\lambda^{-1}\gamma_{m+1})(r-2)\omega^{-1},$$
and the right-hand-side equals
\begin{align*}
&\lambda^{-n-1}\vartheta_{n+1}\vartheta_n\gamma_{m+1}\gamma_m(r-2+(a-1)(d-1)) \\
=\ &\lambda^{-n-1}\vartheta_{n+1}\vartheta_n\gamma_{m+1}\gamma_m
\Big(2+\frac{(2-r)(\lambda^n\vartheta_n^{-1}+\gamma_m+\lambda^n\gamma_{m+1})}{\vartheta_n\omega}\Big)\frac{r-2}{\omega},
\end{align*}
where (\ref{eq:identity-2'}) is applied. Then (\ref{eq:H-trace}) becomes
\begin{align*}
&\lambda^n\vartheta_{n+1}\vartheta_n^{-1}+\lambda^{n+1}\gamma_m-\lambda^n\gamma_{m+1} \\
=\ &\vartheta_{n+1}\vartheta_n\big(2\gamma_{m+1}\gamma_m+(1+\gamma_{m+1}-\gamma_m)\vartheta_n^{-1}(\lambda^n\vartheta_n^{-1}
+\gamma_m+\lambda^n\gamma_{m+1})\big) \\
=\ &\vartheta_{n+1}\big(\lambda^n\vartheta_n^{-1}-\vartheta_n^{-1}\gamma_m+\lambda^{2n}\vartheta_n^{-1}\gamma_{m+1}
+(\lambda^n\gamma_{m+1}-\gamma_m)(\gamma_{m+1}+\gamma_m)\big),
\end{align*}
which is equivalent to (\ref{eq:H-CJL0}).

Since $\mathbf{v}_{22}\ne 1$, we have $\gamma_m\ne\lambda^n\gamma_{m+1}$. Thus
\begin{align*}
\gamma_{m+1}+\gamma_m+\vartheta_{2n+1}\vartheta_{n+1}^{-1}\vartheta_n^{-1}=0. 
\end{align*}

\begin{rmk}
\rm As an equation in $r$, it has $m$ solutions for generic $\lambda$.

We should still exclude the solutions with $\gamma_m=\lambda^n\gamma_{m+1}$ or $(\gamma_{m+1},\gamma_m)\in\{(1,0), (0,-1)\}$.
\end{rmk}

Using (\ref{eq:H-a-d-1}) we compute
\begin{align*}
\mathbf{v}_{12}
&=\lambda^{-1}(\lambda^{-n}\gamma_m-\gamma_{m+1}), \\
1-\mathbf{v}_{22}&=1-d+\vartheta_n\gamma_{m+1}\gamma_mc\stackrel{(\ref{eq:H-trace})}=\frac{\lambda^n(a-1)+d-1}{\vartheta_{n+1}}
=\frac{(2-r)(\lambda^n\gamma_{m+1}-\gamma_m)}{\vartheta_{n+1}\omega},
\end{align*}
and then, remembering $(c',d')\parallel(1-\mathbf{v}_{22},\mathbf{v}_{12})$, reduce (\ref{eq:H-2}) into
\begin{align}
c'=\frac{\lambda^{n+1}(r-2)}{(2n+1)\vartheta_{n+1}\omega}, \qquad d'=\frac{1}{2n+1}.   \label{eq:c'd'-1}
\end{align}

\subsubsection{$\mathbf{v}_{22}=1$}

In this case, $\vartheta_n\gamma_{m+1}\gamma_mc=d-1$, which combined with (\ref{eq:H-trace}) and Lemma \ref{lem:bc=0} implies $\vartheta_n\gamma_{m+1}\gamma_mc\ne 0$, and
\begin{align}
a=1+(2-r)\vartheta_n^{-1}, \qquad d=1+\lambda^n(r-2)\vartheta_n^{-1}. \label{eq:H-a-d-2}
\end{align}

Now that $1-\mathbf{v}_{11}=1-\lambda^{-1}\ne 0$, (\ref{eq:determine-u-2}) implies $(c',d')\parallel(\mathbf{v}_{21},1-\lambda^{-1})$.

It follows from $c=ad-1$ together with $r\ne 2$ that
$$\lambda^n+\lambda^{-n}=r+(\gamma_{m+1}\gamma_m)^{-1}\stackrel{(\ref{eq:identity-2})}=\gamma_{m+1}\gamma_m^{-1}+\gamma_{m}\gamma_{m+1}^{-1},$$
implying $\gamma_{m+1}=\lambda^{\pm n}\gamma_{m+1}$.
Note that
\begin{align*}
\mathbf{v}_{21}=(\gamma_{m+1}-\gamma_m)(\gamma_{m+1}-\lambda^n\gamma_m)c,  
\end{align*}
which, by (\ref{eq:H-1}), does not vanish. Hence $\gamma_{m+1}=\lambda^{-n}\gamma_{m}$.
As a result,
\begin{align*}
\lambda^{-n}\vartheta_n^2\gamma_m^2c=\frac{\mathbf{v}_{21}}{\lambda^n+1}=r-2=\lambda^{-n}\vartheta_n^2-\lambda^n\gamma_m^{-2},
\end{align*}
and (\ref{eq:H-a-d-2}) can be written in the same form as (\ref{eq:H-a-d-1}).

Assume $(c',d')=\kappa(\mathbf{v}_{21},1-\lambda^{-1})$.
Then we can deduce from (\ref{eq:H-1}) that
\begin{align*}
\gamma_m=\frac{-\lambda^n\vartheta_{2n+1}}{\vartheta_{2n}\vartheta_{n+1}}, \qquad
\gamma_{m+1}=\frac{-\vartheta_{2n+1}}{\vartheta_{2n}\vartheta_{n+1}},  
\end{align*}
and deduce from (\ref{eq:H-2}) that
\begin{align*}
\lambda\kappa^{-1}&=n\gamma_m(\lambda^{-n}+1)(r-2)+(\lambda-1)\big(n\gamma_m((r-2)\lambda^n\vartheta_n^{-1}+\lambda^{-n}\vartheta_n)+n+1\big) \\
&=(2n+1)(\lambda-1).
\end{align*}
Hence
\begin{align*}
c'=\frac{(\lambda^n+1)(r-2)}{(2n+1)\vartheta_{n+1}\omega}, \qquad d'=\frac{1}{2n+1},
\end{align*}
which is in the same form as (\ref{eq:c'd'-1}).

Furthermore, $\gamma_{m+1}+\gamma_m+\vartheta_{2n+1}\vartheta_{n+1}^{-1}\vartheta_n^{-1}=0$ is satisfied.

\subsection{The result}

Note that
$$\lim\limits_{r\to 2}\frac{2-r}{1+\gamma_m-\gamma_{m+1}}\stackrel{(\ref{eq:identity-2'})}=\lim\limits_{r\to 2}\frac{1+\gamma_{m+1}-\gamma_m}{\gamma_{m+1}\gamma_m}=\frac{2}{(m+1)m}.$$
Let
\begin{align*}
\delta_r=\begin{cases} (2-r)/(1+\gamma_m-\gamma_{m+1}), &r\ne 2, \\ 2/(m+1)m, &r=2. \end{cases}   
\end{align*}

The results of Section 3.1, 3.2.1 and 3.2.2 can be written uniformly.

Let $\sigma_{\lambda,r,\alpha}=\rho_{\mathbf{z},\mathbf{x},\mathbf{u}}$, with $\mathbf{z}=\mathbf{d}(\lambda,1)$,
\begin{align*}
\mathbf{x}&=\left(\begin{array}{cc} 1+\delta_r(\vartheta_n^{-1}+\gamma_{m+1}) & 1 \\
\delta_r(\delta_r(\gamma_m+\lambda^{2n+1}\vartheta_{n+1}^{-1}\vartheta_n^{-1})-2) & 1-\delta_r(\lambda^n\vartheta_n^{-1}+\gamma_m) \end{array}\right), \\
\mathbf{u}&=\left(\begin{array}{cc} \alpha(1-\lambda+\lambda\delta_r(\lambda^n\gamma_{m+1}-\gamma_m)\vartheta_{n+1}^{-1}) & \alpha(\lambda^{-n}\gamma_m-\gamma_{m+1}) \\ -\lambda^{n+1}\delta_r\vartheta_{n+1}^{-1}/(2n+1) & 1/(2n+1) \end{array}\right).
\end{align*}
Here we have put $\alpha=(2n+1)t/(1-\lambda)$ and determined $a',b'$ from (\ref{eq:determine-u-3}) and $a'd'-b'c'=t$.

Recall (\ref{eq:gamma-k}) that $\gamma_k$ is a function of $r$.

\begin{thm}
The cohomology jump locus $\mathcal{J}_3(H_{m,n})=\chi(\mathcal{F})$, where $\mathcal{F}\subset\mathcal{R}^{\rm irr}(H_{m,n})$ consists of $\sigma_{\lambda,r,\alpha}$'s such that
\begin{align*}
\alpha, \lambda\in\mathbb{C}^\ast, r\in\mathbb{C}, \quad \vartheta_{n+1}\vartheta_n\ne 0, \quad (\gamma_{m+1},\gamma_m)\ne (1,0),(0,-1), \\
\gamma_{m+1}+\gamma_m+\vartheta_{2n+1}\vartheta_{n+1}^{-1}\vartheta_n^{-1}=0.
\end{align*}
Under the identification $(H_{m,n})^\wedge\cong\mathbb{C}^\ast\times\mathbb{C}^\ast$ via $\tau\mapsto(\tau(z),\tau(u))$, the morphism
${\det}_\ast:\mathcal{J}_3(H_{m,n})\to(H_{m,n})^\wedge$ is given by
$${\det}_\ast([\sigma_{\lambda,r,\alpha}])=\Big(\lambda,\frac{1-\lambda}{2n+1}\alpha\Big)$$
and has image $\{\lambda\in\mathbb{C}^\ast\colon\vartheta_{n+1}\vartheta_n\ne 0\}\times\mathbb{C}^\ast$.
\end{thm}

The last assertion is guaranteed by
\begin{lem}
Suppose $2m+1+\vartheta_{2n+1}\vartheta_{n+1}^{-1}\vartheta_n^{-1}\ne 0$. Then the polynomial $h_\lambda(r):=\gamma_{m+1}+\gamma_m+\vartheta_{2n+1}\vartheta_{n+1}^{-1}\vartheta_n^{-1}$ has at least one root with
$(\gamma_{m+1},\gamma_m)\ne (1,0),(0,-1)$.
\end{lem}
\begin{proof}
When $m=1$, $(\gamma_{m+1},\gamma_m)=(r,1)\ne (1,0),(0,-1)$.

Suppose $m\ge 2$.
Obviously,
\begin{align*}
(\gamma_{m+1},\gamma_m)=(1,0)&\Leftrightarrow r\in\Big\{2\cos\Big(\frac{2j\pi}{m}\Big)\colon 1\le j\le \big[\frac{m}{2}\big]\Big\},  \\
(\gamma_{m+1},\gamma_m)=(0,-1)&\Leftrightarrow r\in\Big\{2\cos\Big(\frac{2j\pi}{m+1}\Big)\colon 1\le j\le \big[\frac{m+1}{2}\big]\Big\}.
\end{align*}
Also obvious is that for $r\ne 2$, 
$$h'_\lambda(r)=\frac{m\gamma_{m+1}-(m+1)\gamma_m}{r-2}.$$

If $h_\lambda$ has a root $r$ with $(\gamma_{m+1},\gamma_m)=(1,0)$, then $h_\lambda$ has $[m/2]$ such roots, each of which is single, and since $\vartheta_{2n+1}\vartheta_{n+1}^{-1}\vartheta_n^{-1}=-1$, it is impossible for $h_\lambda$ to have a root with $(\gamma_{m+1},\gamma_m)=(0,-1)$. Hence $h_\lambda$ has at least one root such that $(\gamma_{m+1},\gamma_m)\ne (1,0),(0,-1)$.

The case when $h_\lambda$ has a root $r$ with $(\gamma_{m+1},\gamma_m)=(0,-1)$ is similar.
\end{proof}

\medskip

\begin{proof}[Proof of Theorem \ref{thm:solution-H}]
Suppose there exists an isomorphism $\phi:H_{m',n'}\stackrel{\cong}\to H_{m,n}$.
Then for a generic point $\mathfrak{a}\in\det_\ast(\mathcal{J}_3(H_{m,n}))$,
$$m=\#{\det}_\ast^{-1}(\mathfrak{a})=\#{\det}_\ast^{-1}(\phi^\ast(\mathfrak{a}))=m',$$
and from ${\det}_\ast(\mathcal{J}_3(H_{m,n}))\cong {\det}_\ast(\mathcal{J}_3(H_{m',n'}))$ we see $n=n'$.
\end{proof}

\end{document}